\newcommand{\ABS}[1]{{\left| #1 \right|}} 
\newcommand{\PAR}[1]{{\left(#1\right)}} 
\newcommand{\BRA}[1]{{\left\{#1\right\}}} 
\newcommand{\NRM}[1]{{\left\Vert #1\right\Vert}} 
\newcommand{\ind}{\mathds {1}}
\newcommand{\dE}{\mathbb{E}}
\newcommand{\dN}{\mathbb{N}}
\newcommand{\dP}{\mathbb{P}}
\newcommand{\dR}{\mathbb{R}}
\newcommand{\cB}{{\mathcal B }}
\newcommand{\cE}{{\mathcal E }}
\newcommand{\cL}{{\mathcal L }}
\newcommand{\cP}{{\mathcal P }}
\newcommand{\cX}{{\mathcal X }}
\newcommand{\N}{{\scriptscriptstyle N}}
\newtheorem{thm}{Theorem}[section]
\newtheorem{cor}[thm]{Corollary}
\newtheorem{prop}[thm]{Proposition}
\newtheorem{lem}[thm]{Lemma}
\newtheorem{defi}[thm]{Definition}
\newtheorem{rem}[thm]{Remark}
\newenvironment{proof}
               {\noindent {\textbf{Proof.}}}
               {\proofend}
\newcommand{\proofend}{\hfill $\Box{~}$}
\title{Quantitative estimates for the long time behavior of an 
ergodic variant of the telegraph process}
\author{%
  Joaquin~\textsc{Fontbona}, %
  H\'el\`ene~\textsc{Gu\'erin}, %
  Florent~\textsc{Malrieu}}
\begin{document}

\maketitle

\tableofcontents

\begin{abstract} 

Motivated by stability questions on piecewise deterministic Markov models 
of bacterial chemotaxis, we study the long time behavior of a   variant of  
the classic telegraph process having a non-constant jump rate that induces 
a  drift  towards the origin. We compute its invariant law and show 
exponential ergodicity, obtaining a quantitative control of the total 
variation distance  to equilibrium at each instant of time. These results rely on an 
exact description of the excursions of the process away from the origin and 
on  the explicit  construction of  an original coalescent coupling for both 
velocity and position. Sharpness of the obtained convergence rate is 
discussed.

\end{abstract}

\noindent\textbf{Key words and phrases.}  Piecewise Deterministic 
Markov Process, coupling, long time behavior, telegraph process, 
chemotaxis models.

\section{Introduction}
\subsection{The model and main results}

Piecewise Deterministic  Markov Process (PDMP) have been 
extensively studied in the last two decades  and received renewed 
attention in recent years in different applied probabilistic models (we refer 
to  \cite{Da} or \cite{Jacob} for  general  background). We consider the 
simple PDMP of kinetic type ${(Z_t)}_{t\geq 0}={((Y_t,W_t))}_{t\geq 0}$
with values in  $\dR\times\BRA{-1,1}$ and  infinitesimal generator 
\begin{equation}\label{eq:geninfsimple}
Lf(y,w)=w\partial_yf(y,w)+\PAR{a+(b-a)\ind_\BRA{yw>0}}(f(y,-w)-f(y,w)),
\end{equation}
where $b\geq a>0$ are given real numbers. 
That is, the  continuous component  $Y_t$ evolves according to 
$\frac{d Y_t}{dt}= W_t$
 and represents the position of a particle on the real line,  
whereas  the component $W_t$ represents the velocity of the particle 
and jumps between $+1$ and $-1$, with instantaneous state-dependent 
rate. More precisely, as long as $Y_t$ is positive the jump rate of the 
velocity is equal to $b$ if $W=+1$, and it is equal to $a$ if $W=-1$; 
the situation is reversed if $Y_t$ is negative. The case $a=b$ 
corresponds to the classical \emph{telegraph process} 
in $\dR\times \BRA{-1,+1}$ introduced by Kac \cite{kac}, in which 
case the density of $(Y_t)$ solves the damped wave equation 
$$
\frac{\partial^2p}{\partial t^2}-\frac{\partial^2p}{\partial x^2}
+a \frac{\partial p}{\partial t}=0
$$ 
called the \emph{telegraph equation}. 
The telegraph process,  as well as  its variants and its connections 
with the so-called persistent random walks have received considerable 
attention both in the physical and mathematical literature (see e.g. \cite{herrmann} 
for historical references and for some recent probabilistic  developments). 
It is well known that $(Y_t)_{t\geq 0}$ converges when $a=b$ to the 
standard one dimensional Brownian motion in the suitable scaling limit. 
Figure \ref{fi:traj} shows a path of $Y$ driven by \eqref{eq:geninfsimple} with
$a=1$ and $b=2$. 
\begin{figure}
\begin{center}
 \includegraphics[scale=0.6]{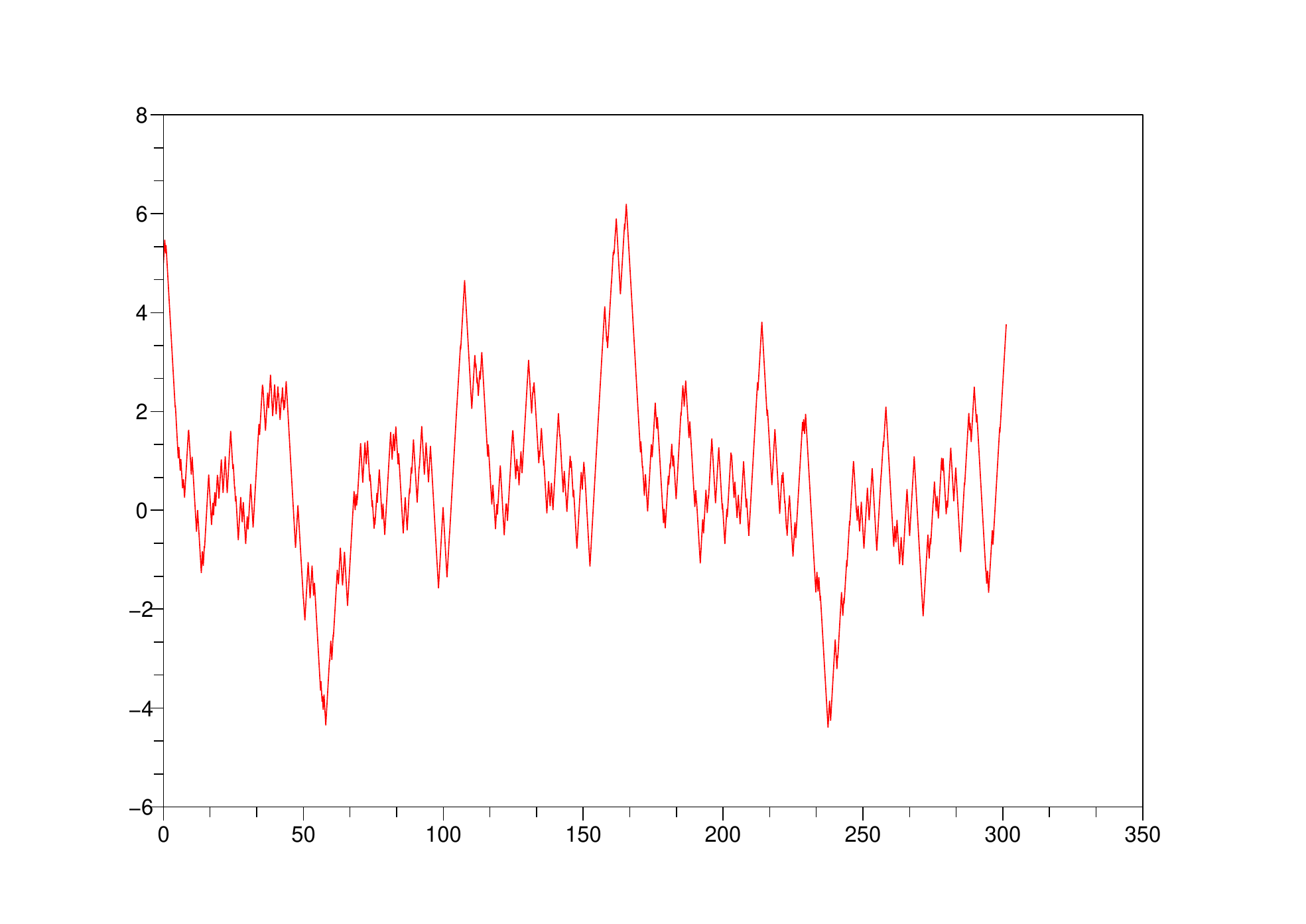}
 \caption{Trajectory of the continuous part $Y$ from the dynamics
  \eqref{eq:geninfsimple} with $a=1$ and $b=2$.}
 \label{fi:traj}
\end{center}
\end{figure}  

In this paper, we are interested in  the long-time  stability  properties 
of the process \eqref{eq:geninfsimple}  when  $b>a$. One of our   
motivations is a better understanding of  dissipation mechanisms in the setting of hyperbolic equations, where the telegraph process appears as the prototypical associated Markov  process.   A second motivation is  to make a first step in tackling questions on the trend to equilibrium 
of {\it velocity jump processes} introduced  in  \cite{othmer}, \cite{EO}, which model  the interplay between intra-cellular chemoattractant response mechanisms and collective (macroscopic) behavior of unicellular organisms.  These PDMP describe the motion of  flagellated bacteria as a sequence of  linear ``runs'', the directions of which  randomly change at rates that evolve according some simple dynamics that represent internal adaptive or excitative  responses to chemical  changes in the environment (we refer the reader to \cite{rousset}  for a deeper probabilistic description). The emergence of macroscopical drift is  expected when the response  mechanism favors longer runs in specific directions, and  has been numerically confirmed  in  \cite{othmer}, \cite{EO}. In \cite{rousset}, with the aim of developing variance reduction techniques for the numerical simulation of these models, a so-called {\it gradient sensing process} was derived  from them, in asymptotics  where 
 the response mechanisms of bacteria, roughly speaking, act infinitely fast (see Lemma 2.5  in  \cite{rousset}   for a
precise mathematical statement).  The process $(Z_t)$ above   exactly  corresponds  to the gradient sensing process  for  the particular 
   chemoattractant potential   $S(x)=c|x|\geq 0$ in $\dR$, and constitutes  a tractable toy model for the long-time behavior of the processes considered  in  \cite{othmer}, \cite{EO}, \cite{rousset} .   
    
 When $b>a$  a particle  driven 
by \eqref{eq:geninfsimple} spends in principle  more 
time moving towards the origin than away from it. Thus, a macroscopic
attraction to the origin should take place in the long run,  though in a  consistent way with the fact that the particle has constant speed.  Our  main goal is 
to clarify this picture  by determining  the invariant measure $\mu$ of 
$(Y,W)$ when $b>a$, and 
obtaining quantitative bounds ({\it i.e.} estimates that are explicit 
functions of the parameters $a$ and $b$)  for the convergence to 
$\mu$ of the law of $(Y_t,W_t)$  as $t$ goes to infinity. Denote by 
$\| \eta -\tilde \eta \|_{\mathrm{TV}}$ the total variation distance between 
two probability measures $\eta$ and $\tilde \eta$ on $\dR$ (recalled below at 
\eqref{eq:def-var-tot}). Our main result is 

\begin{thm}\label{th:no-reflection}
 The invariant probability measure $\mu$ of $(Y,W)$ is the product
 measure on $\dR\times \BRA{-1,+1}$ given by
 $$
 \mu(dy,dw)=\frac{b-a}{2}e^{-(b-a)\ABS{y}}dy 
 \otimes \frac{1}{2}(\delta_{-1}+\delta_{+1})(dw).
 $$
Moreover, denoting by $\mu_t^{y,w}$ the law of $Z_t=(Y_t,W_t)$ when issued 
from $Z_0=(y,w)$, we have, for any $y,\tilde y\in\dR$ and 
$w,\tilde w\in\BRA{-1,+1}$, 
 \begin{equation}   \label{eq:var-no-reflection}
 \NRM{\mu_t^{y,w}-\mu_t^{\tilde y,\tilde w}}_{\mathrm{TV}}%
 \leq C(a,b) e^{r(a,b)\ABS{y }\vee \ABS{ \tilde y}} e^{-\lambda_c t},
\end{equation}
where
$$
C(a,b)=\PAR{\frac{b}{a}}^{5/2}\frac{a+b}{\sqrt{ab}+b},
\quad
r(a,b)=\frac{3(b-a)}{4} \vee (b-\sqrt{ab})
\quad\text{and}\quad
\lambda_c=\frac{(\sqrt{b}-\sqrt{a})^2}{2}.
$$
\end{thm}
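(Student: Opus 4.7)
The plan is to address the two assertions separately. The invariant measure can be identified by a direct Fokker--Planck computation, and the quantitative convergence estimate is obtained by constructing an explicit coalescent coupling whose coupling time has exponential tail at rate $\lambda_{c}$.

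For the identification of $\mu$, I would check $L^{\ast}\mu=0$ in the distributional sense against smooth test functions. The density $p(y,w)=\tfrac{b-a}{4}e^{-(b-a)\ABS{y}}$ (with respect to Lebesgue times counting measure) does not depend on $w$, so the jump part of $L^{\ast}p(y,w)$ reduces to $(q(y,-w)-q(y,w))p(y,w)$, where $q(y,w)=a+(b-a)\ind_{\BRA{yw>0}}$. On each of the four sign regions of $(y,w)$ this contribution is $\pm(b-a)p(y,w)$ and is exactly cancelled by the transport term $-w\partial_{y}p=w(b-a)\,\mathrm{sgn}(y)\,p$. Continuity of $p$ at $y=0$ kills any distributional boundary contribution, and the exponent $b-a$ appearing in $\mu$ is forced by this cancellation.

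For the quantitative bound, the strategy is a coupling argument: build a Markovian coupling $(Z_{t},\tilde Z_{t})$ of two copies issued from $(y,w)$ and $(\tilde y,\tilde w)$ with coalescence time $\tau$, and use $\NRM{\mu_{t}^{y,w}-\mu_{t}^{\tilde y,\tilde w}}_{\mathrm{TV}}\le\dP(\tau>t)$. The rate $\lambda_{c}=\tfrac{1}{2}(\sqrt{b}-\sqrt{a})^{2}$ is the critical exponent for excursions of $Y$ away from $0$: setting $T_{0}=\inf\{t>0:Y_{t}=0\}$, the functions $u_{\pm}(y)=\dE_{(y,\pm1)}[e^{\lambda T_{0}}]$ satisfy a constant-coefficient linear $2\times 2$ ODE on $y>0$, whose characteristic polynomial has discriminant $(b-a)^{2}+4\lambda^{2}-4\lambda(a+b)$; this first vanishes at $\lambda=\lambda_{c}$ as $\lambda$ grows from $0$. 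For $\lambda<\lambda_{c}$, $u_{\pm}(y)$ grows exponentially in $y$ at the rate of the larger real root of the characteristic polynomial, and tracking this rate as $\lambda\uparrow\lambda_{c}$ produces the prefactor $e^{r(a,b)(\ABS{y}\vee\ABS{\tilde y})}$.

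The core of the argument is the coupling construction itself. Since positions evolve ballistically, the gap $Y_{t}-\tilde Y_{t}$ changes at deterministic speed $\pm 2$ when $W_{t}\ne\tilde W_{t}$ and is frozen otherwise. The plan is to couple the Poisson clocks driving $W$ and $\tilde W$ so that, whenever the velocities disagree, the sign of the mismatch \emph{closes} the position gap, while engineering a joint visit to the origin with matching velocity, after which both copies can be driven by the same jump times. A natural implementation is to let the copies evolve independently until one of them, say $\tilde Z$, visits $0$, and then launch a coupling attempt during the ensuing excursion that seeks a simultaneous arrival of $Z$ and $\tilde Z$ at $0$ with equal velocity; each such attempt succeeds with probability uniformly bounded below by a Doeblin-type argument on the two-dimensional jump dynamics, and the waiting time between attempts is dominated in Laplace transform by a single excursion of $Y$, controlled at any $\lambda<\lambda_{c}$ by the analysis above. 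A geometric sum then produces the exponential tail $\dP(\tau>t)\le C(a,b)e^{r(a,b)(\ABS{y}\vee\ABS{\tilde y})}e^{-\lambda_{c}t}$.

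The main obstacle, in my view, is designing a single Markovian coupling that simultaneously controls both the velocity mismatch and the sign and magnitude of the position gap, while preserving the marginal laws of $(Z_{t})$ and $(\tilde Z_{t})$; the interaction between positions (through the sign of $y$) and the state-dependent rates $a,b$ makes the naive synchronous or reflection couplings insufficient. Once the construction is carried out, the explicit constants $C(a,b)$ and $r(a,b)$ follow by tracking the Laplace exponent of excursions as $\lambda\uparrow\lambda_{c}$, combined with the Doeblin constant of each coupling attempt.
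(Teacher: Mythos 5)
Your identification of the invariant measure by checking $L^{*}\mu=0$ region by region is a legitimate and more elementary alternative to the paper's route (the paper computes the invariant law of the reflected process by a regeneration/excursion-decomposition argument and then transfers it); just note that you also need uniqueness of the invariant law, which your convergence estimate would supply. Your diagnosis of $\lambda_c$ as the critical exponent of the Laplace transform of the hitting time of the origin is also exactly right and matches the paper's computation of the excursion-length transform $\psi$.

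The gap is in the coupling construction, which is the heart of the proof and which you leave as a plan. Two concrete problems. First, the scheme ``independent evolution until a visit to $0$, then a coupling attempt succeeding with Doeblin probability $p>0$, then a geometric sum'' cannot deliver the rate $\lambda_c$: if each failed attempt costs a time whose Laplace transform is the excursion transform $\psi$, the geometric sum has Laplace transform $\frac{p\,\psi(\lambda)}{1-(1-p)\psi(\lambda)}$, which blows up strictly before $\lambda_c$ unless $(1-p)\psi(\lambda_c)<1$; since $\psi(\lambda_c)=\sqrt{b/a}>1$ this forces $p>1-\sqrt{a/b}$, and nothing in your argument produces such a lower bound. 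Second, a ``Doeblin-type'' minorization for a simultaneous arrival of the two copies at $0$ with equal velocities is not available off the shelf: the motion is deterministic between jumps, so the joint transition kernels are singular and the event of exact simultaneous arrival at the origin has probability zero under independent evolution --- it must be engineered. The paper resolves both issues at once by building a coupling that succeeds \emph{surely} in one pass: a ``crossing'' phase in which the velocities are forced equal whenever possible, so that the gap $X-\tilde X$ decreases at speed $2$ during the independence periods (whose total length is exactly $(x-\tilde x)/2$) and is frozen otherwise, guaranteeing that the positions meet; followed by a ``sticking'' phase that reuses the same exponential clocks in swapped order and provably coalesces before the next visit of $X$ to the origin. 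Because there is no failure probability, the coupling time is dominated by an explicit sum of excursion lengths and hitting times, and the critical rate $\lambda_c$ survives. The exponent $r(a,b)=\frac{3(b-a)}{4}\vee(b-\sqrt{ab})$ then comes from the bookkeeping of that construction (the term $xc(\lambda_c)+\frac{x+\tilde x}{2}\lambda_c+\frac{x-\tilde x}{2}a(\psi(\lambda_c)-1)$), not merely from the excursion Laplace exponent $c(\lambda_c)=(b-a)/2$ as your last paragraph suggests.
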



 We easily deduce

\begin{cor}\label{integbound}
 Let $\eta$ be a probability measure in $\dR\times\BRA{-1,+1}$ and  let 
 $\mu_t^\eta$ the law of $Z_t$ when the law of $Z_0$ is given by $\eta$. 
 Then, 
 $$
 \NRM{\mu_t^\eta -\mu}_{\mathrm{TV}}
 \leq C(a,b) \int\! e^{r(a,b)\ABS{y}}\, (\mu+\eta)(dy,dw) 
 e^{-\lambda_c t}.
 $$
\end{cor}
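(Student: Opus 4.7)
The plan is to derive the corollary by integrating the two-point bound in Theorem \ref{th:no-reflection} against the product of $\eta$ and $\mu$, exploiting the fact that $\mu$ is an invariant measure.

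First, since $\mu$ is invariant under the semigroup, I can write
$$
\mu_t^\eta = \int \mu_t^{y,w}\, \eta(dy,dw)
\quad\text{and}\quad
\mu = \int \mu_t^{\tilde y,\tilde w}\, \mu(d\tilde y,d\tilde w),
$$
so that, after subtracting,
$$
\mu_t^\eta - \mu
= \iint \bigl(\mu_t^{y,w} - \mu_t^{\tilde y,\tilde w}\bigr)\, \eta(dy,dw)\,\mu(d\tilde y,d\tilde w).
$$
By the convexity (triangle inequality) of the total variation norm, this yields
$$
\NRM{\mu_t^\eta - \mu}_{\mathrm{TV}}
\leq \iint \NRM{\mu_t^{y,w} - \mu_t^{\tilde y,\tilde w}}_{\mathrm{TV}}\, \eta(dy,dw)\,\mu(d\tilde y,d\tilde w).
$$

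Next, I insert the pointwise bound \eqref{eq:var-no-reflection} from Theorem \ref{th:no-reflection}, which gives
$$
\NRM{\mu_t^{y,w} - \mu_t^{\tilde y,\tilde w}}_{\mathrm{TV}}
\leq C(a,b)\, e^{r(a,b)(|y|\vee|\tilde y|)}\, e^{-\lambda_c t}.
$$
The only elementary remark then is that $e^{r(a,b)(|y|\vee|\tilde y|)} = \max\bigl(e^{r(a,b)|y|}, e^{r(a,b)|\tilde y|}\bigr) \leq e^{r(a,b)|y|} + e^{r(a,b)|\tilde y|}$, so that
$$
\iint e^{r(a,b)(|y|\vee|\tilde y|)}\, \eta(dy,dw)\,\mu(d\tilde y,d\tilde w)
\leq \int e^{r(a,b)|y|}\, \eta(dy,dw) + \int e^{r(a,b)|\tilde y|}\, \mu(d\tilde y,d\tilde w),
$$
using that both $\eta$ and $\mu$ are probability measures. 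Combining these two displays gives exactly the claimed inequality.

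No real obstacle arises here; the argument is a direct averaging of Theorem \ref{th:no-reflection} over a coupling of the initial condition with the stationary law, and the only mild care needed is the splitting of the maximum into a sum, so that the right-hand side can be rewritten as an integral against $\mu+\eta$.
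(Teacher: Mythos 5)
Your argument is correct and is exactly the intended deduction: the paper gives no explicit proof (it states ``We easily deduce''), and the standard route is precisely yours --- represent $\mu_t^\eta-\mu$ as a double integral against $\eta\otimes\mu$ using invariance of $\mu$, apply convexity of the total variation norm, insert the two-point bound \eqref{eq:var-no-reflection}, and bound the maximum by the sum to obtain the integral against $\mu+\eta$.
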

The upper bound \eqref{eq:var-no-reflection} is integrable 
under the invariant measure $\mu$ of the full process $(Y,W)$ 
since $r(a,b)<b-a$. Thus, Corollary \ref{integbound} is significant 
as soon as  $(y,w)\mapsto e^{r(a,b)\ABS{y}}$ is $\eta$-integrable, 
ensuring in that case  the convergence to equilibrium at exponential 
rate $\lambda_c$. Figure \ref{fi:loivrai} compares the empirical law of $Y_t$ to 
its invariant measure for successive times (the shapes might be 
compared to those presented in \cite[p. 385]{othmer}).

\begin{figure}
\begin{center}
 \includegraphics[scale=0.2]{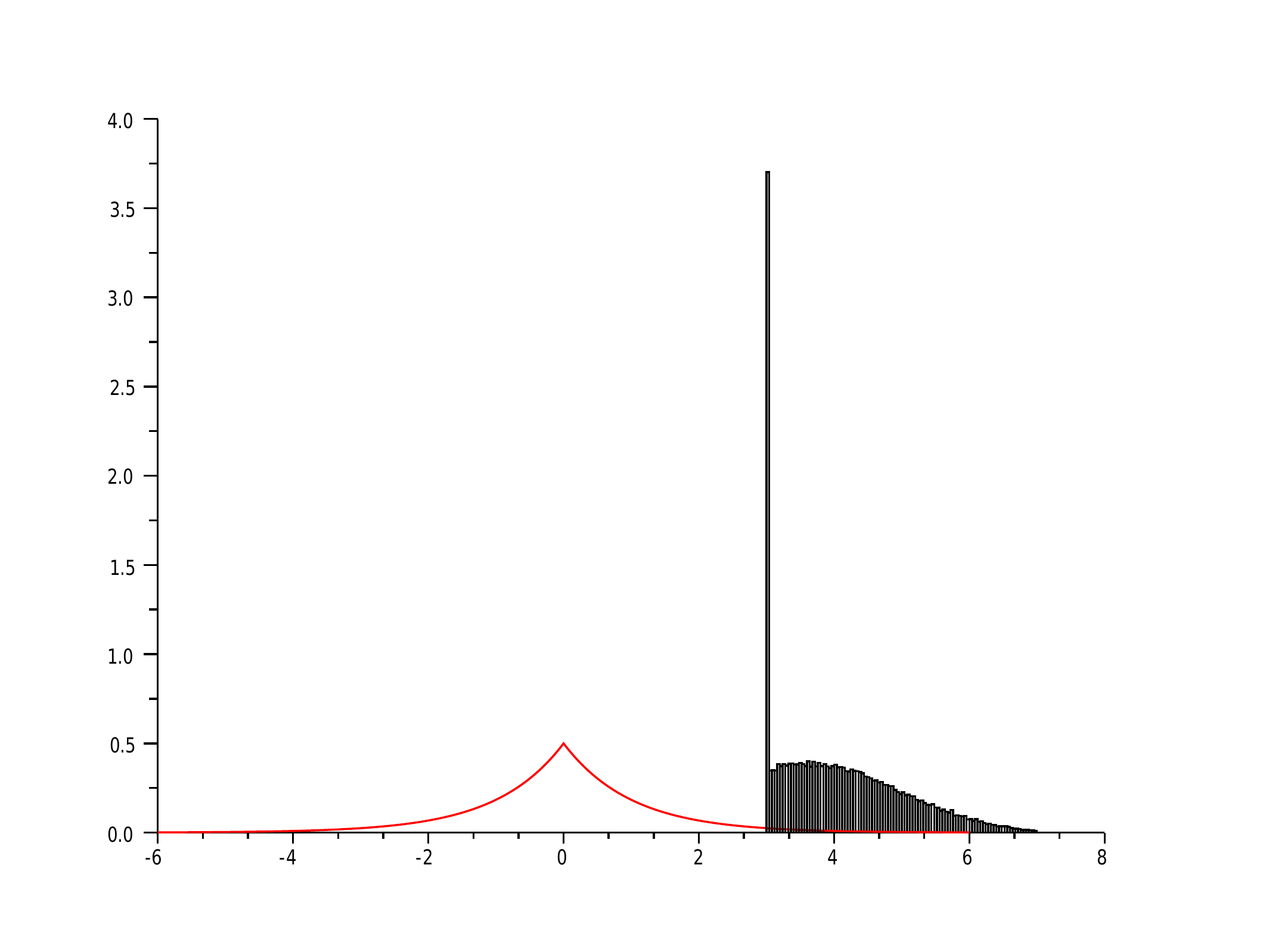}
 \includegraphics[scale=0.2]{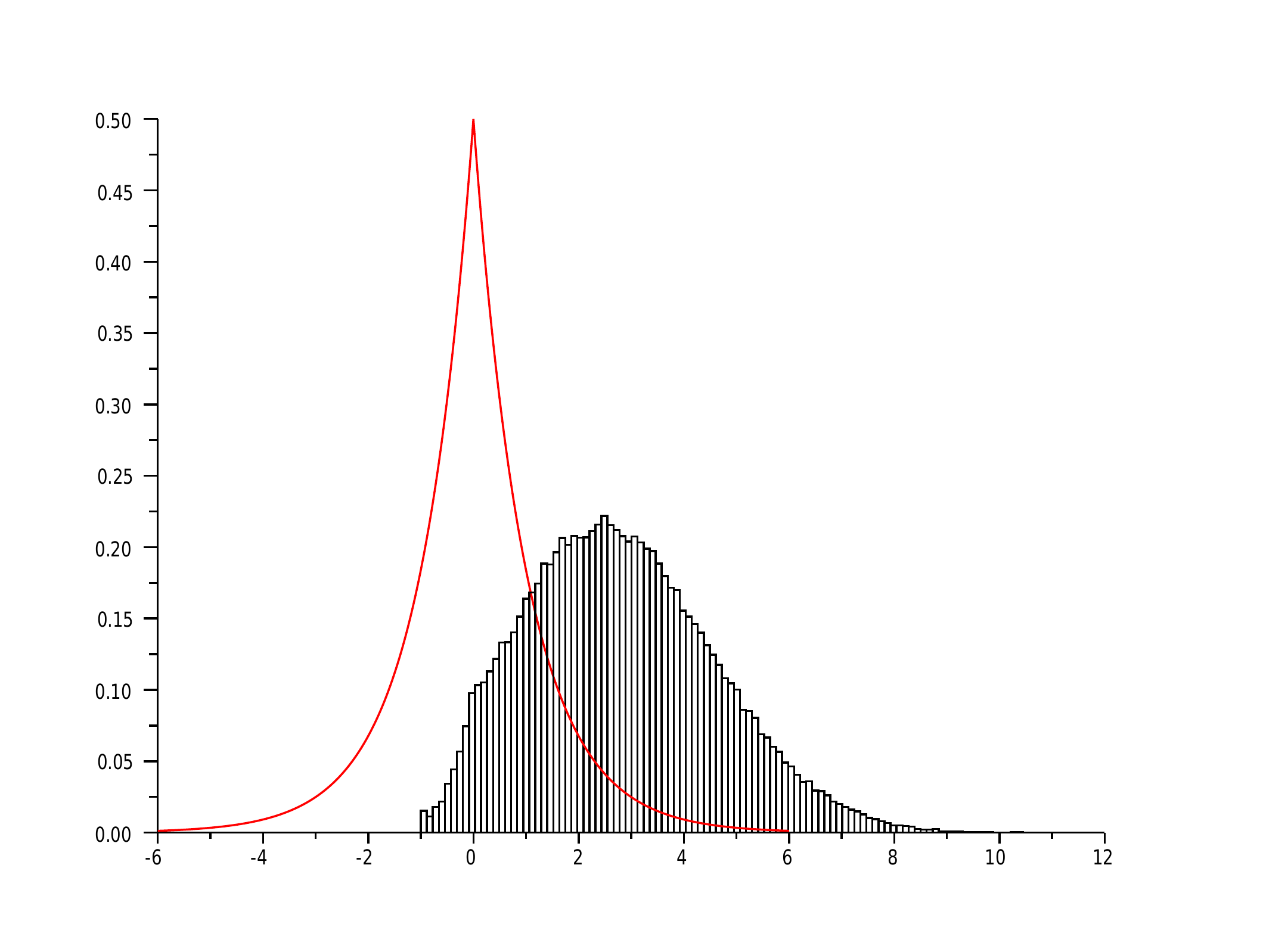}
 \includegraphics[scale=0.2]{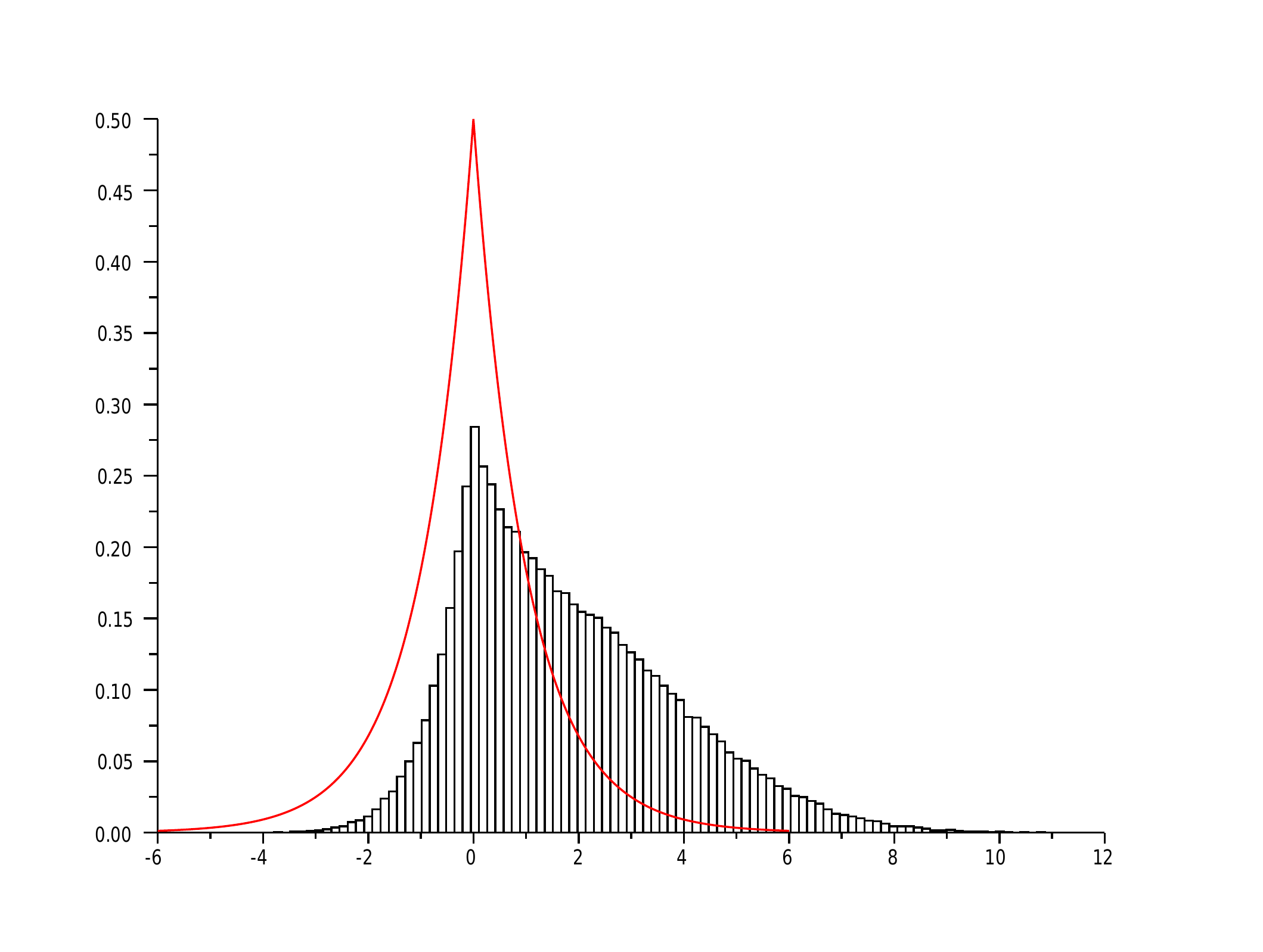}\\
 \includegraphics[scale=0.2]{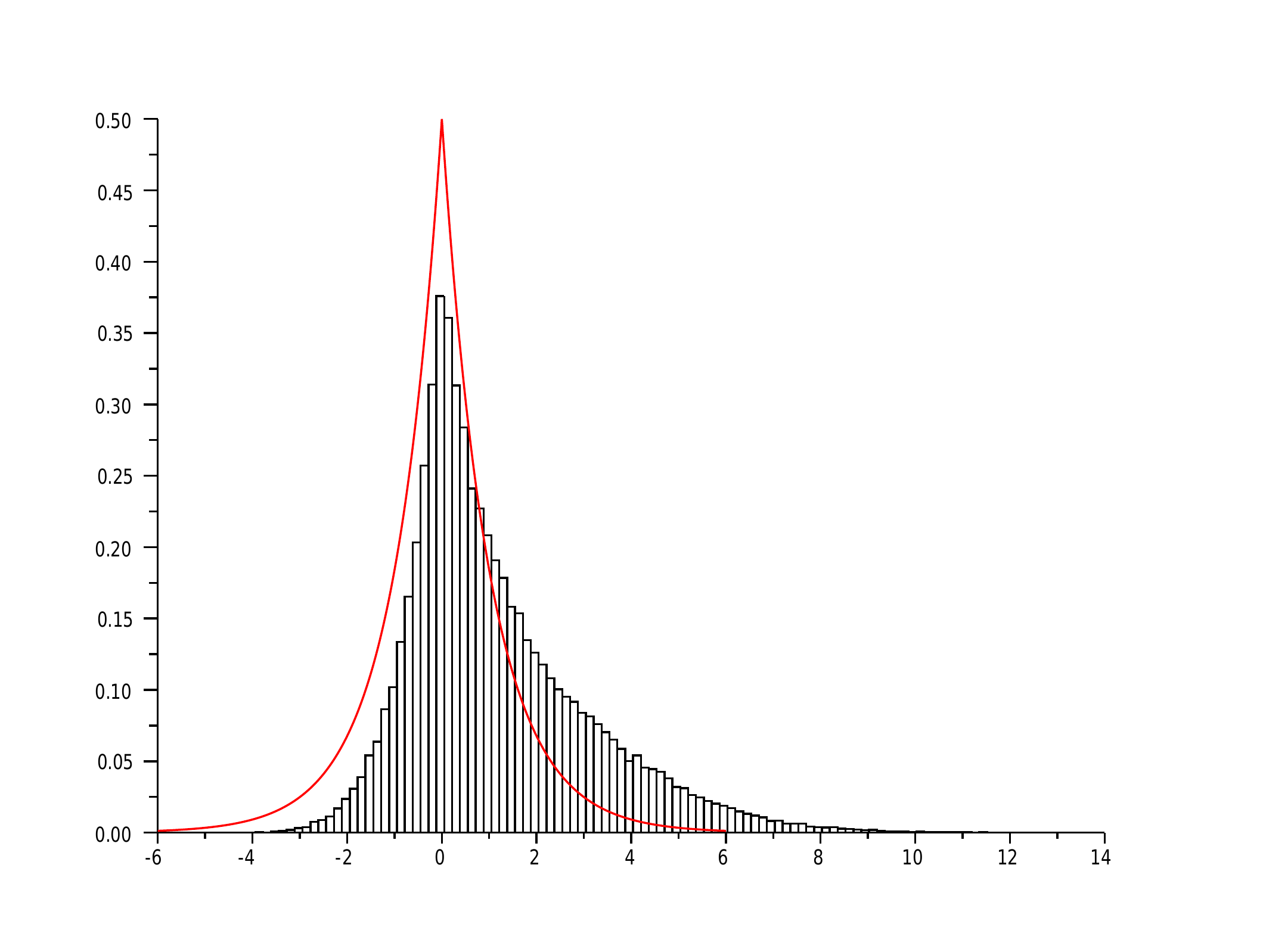}
 \includegraphics[scale=0.2]{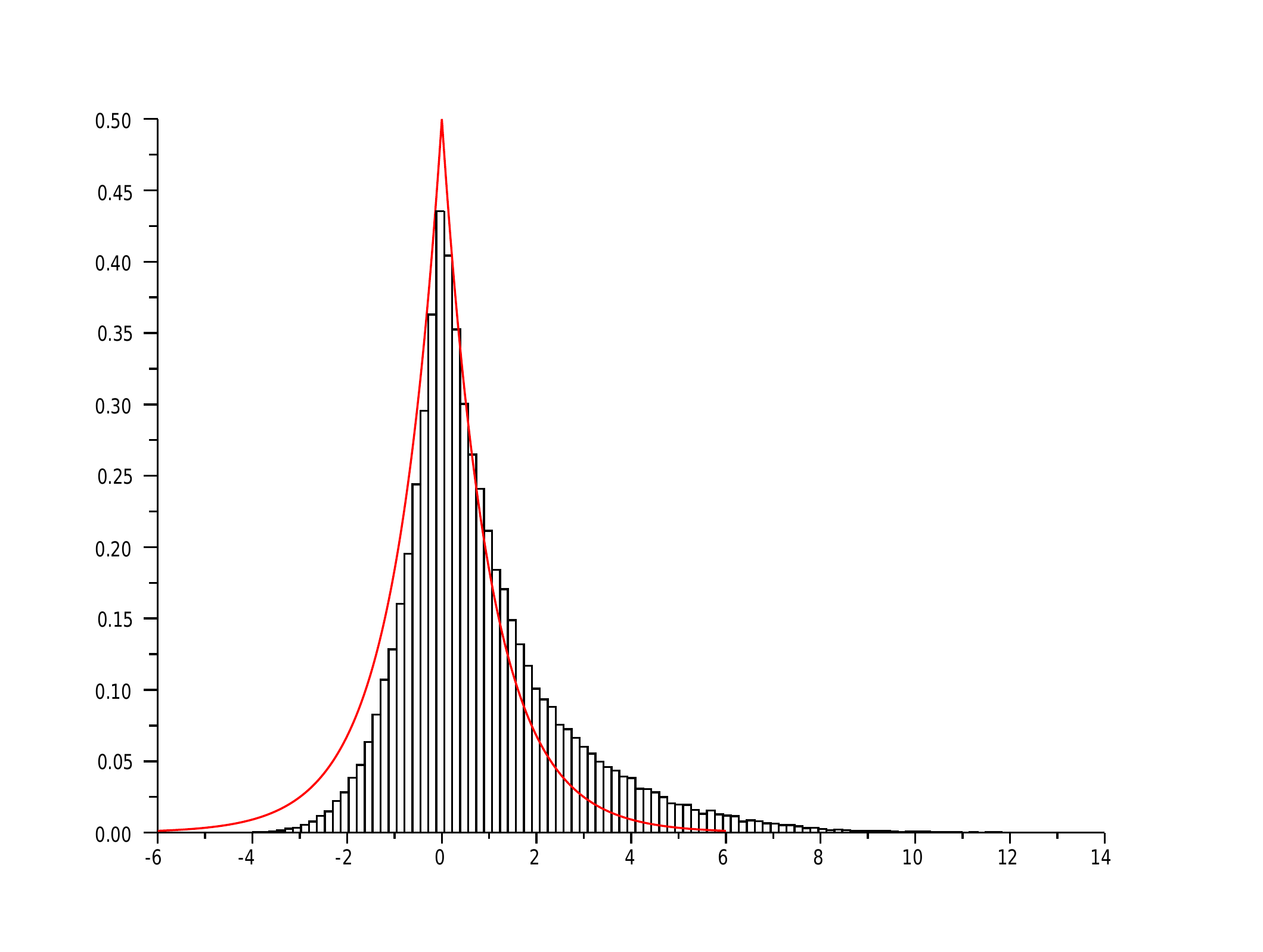}
 \includegraphics[scale=0.2]{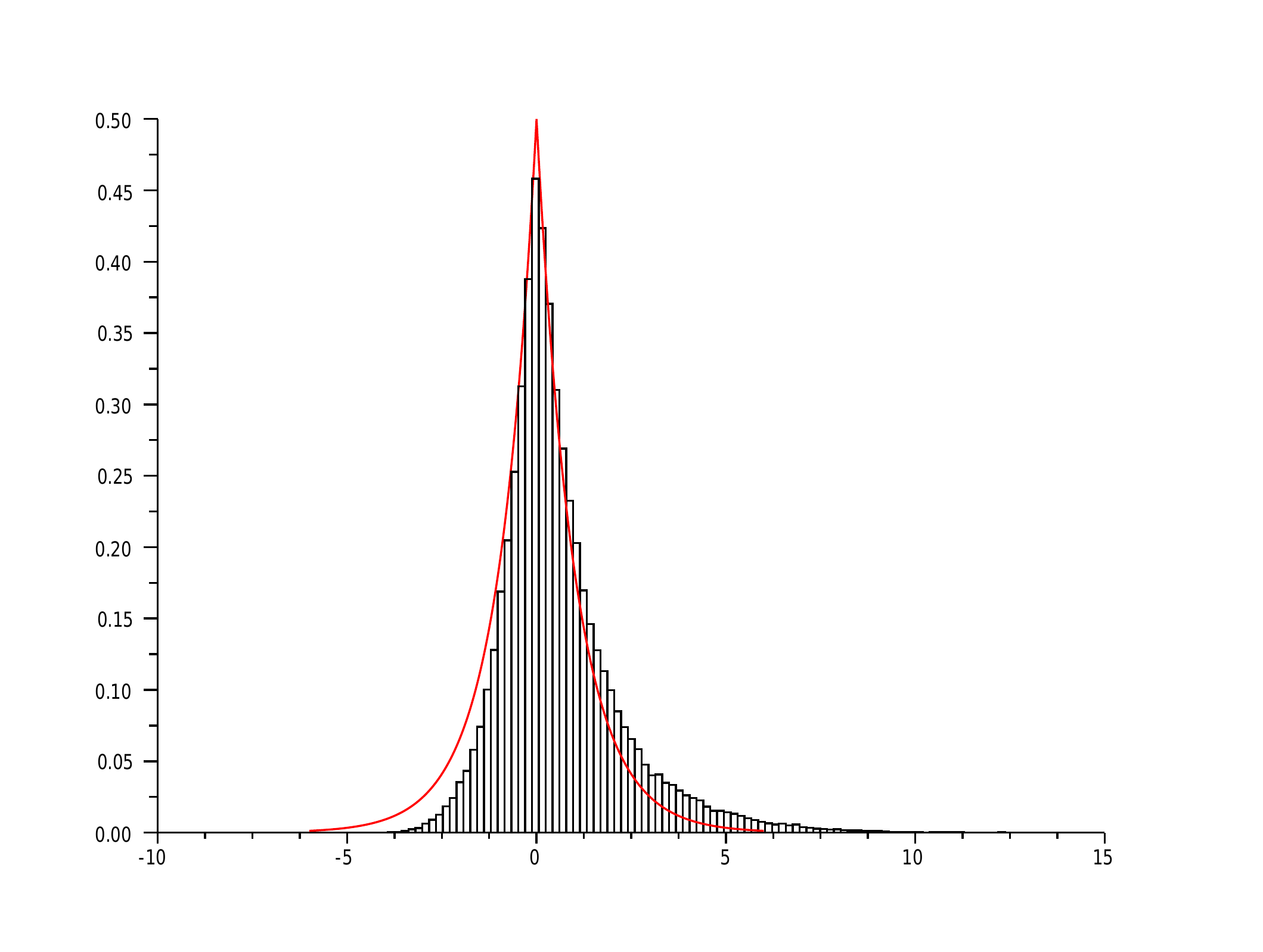}
  \caption{Empirical law of $Y_t$ starting at $(5,-1)$ for
  $t\in\BRA{2,6,10,14,18,22}$ with $a=1$ and $b=2$.}
   \label{fi:loivrai}
\end{center}
\end{figure}

In spite of the simple form of the process \eqref{eq:geninfsimple}, 
fully explicit computations on this model are not easy  to carry out.  When $Y_0=y>0$,  as long  as $t<y$  the law of $(Z_s)_{0\leq s\leq t}$  is  equal to  the 
law of the process with generator
 \begin{equation}\label{eq:tele-herr} 
Hf(y,w)=w\partial_yf(y,w)+\PAR{a+(b-a)\ind_\BRA{w>0}}(f(y,-w)-f(y,w),
\end{equation} 
which was computed in \cite{herrmann} in terms of modified Bessel 
functions. We have been unable to compute the transition laws for
\eqref{eq:geninfsimple} for general time intervals. (Notice when $a<b$  
that long-time behavior of the process driven by \eqref{eq:geninfsimple} 
is completely different  from that  of the process driven by \eqref{eq:tele-herr},
which  drifts to $-\infty$.) 

The proof of  the bound in Theorem \ref{th:no-reflection} will rely on 
the construction of a  coupling (see   \cite{lindvall} and \cite{Thori} for 
background),  which classically provides a convergence rate to 
equilibrium depending on tail estimates of the coupling time. A  related 
and popular approach to  the long-time behavior of Markov processes is 
the Foster-Lyapounov-Meyn-Tweedie theory (see 
\cite{MT,MT2,RR96,BCG}), which allows one to prove exponential ergodicity 
under conditions that are relatively easy to check. Specific applications 
 to  PDMP have been  developed in \cite{CD1,CD2}. Such 
 general results however provide  convergence estimates which  
are not fully explicit,  and sharpness of the bound and rates that can 
be deduced is hardly assessable.  

A fundamental step  in proving Theorem \ref{th:no-reflection} will be 
to first establish  an analogous result for the reflected (at the origin) 
version of the process. A fully explicit coupling will be first constructed  for 
the latter, inspired in the coalescent couplings for 
classic non-negative  continuous-time Markov process,  namely the M/M/1  queue and reflected Brownian motion with 
negative drift. The main difficulties in our case are that we have 
to couple both position and velocity and that,  contrary to those 
examples, we do not have a natural order structure. This 
 prevents us from  using  the   framework developed in  \cite{LMT}  to construct 
couplings  for processes  that are said to be stochastically ordered.

We will next recall basic ideas employed to study the long time behavior 
of Markov processes {\it via} couplings, following \cite{lindvall}. We 
also introduce the ``reflected version'' of the process driven 
by \eqref{eq:geninfsimple}  and state  an  analogue  of 
Theorem \ref{th:no-reflection} for the latter in Theorem \ref{th:conv-reflection}. The 
strategy of the proofs of Theorems \ref{th:no-reflection} and 
\ref{th:conv-reflection} together with the structure of the remainder 
of the paper are then explained. Let us anticipate  that the convergence rate  $\lambda_c$ in 
\eqref{eq:var-no-reflection} will arise as the supremum of the 
domain of the Laplace transform of the hitting times of origin for the 
process $(Y_t)$, suggesting that this rate is sharp. In this
direction, we also will see below  that in the suitable scaling limit where $(Y_t)$ 
converges to the Brownian motion drifted to the origin, the known  total variation convergence 
rate to equilibrium  of the latter is recovered as the rescaled limit of 
the $\lambda_c$'s.

\subsection{Preliminaries }\label{sec:prelim}
 In all the sequel we will use the notation 
$\overset{\cL}{=}$ meaning ``equal in law to''. By $\cE(\lambda)$  
and $\cP(\lambda)$ we will respectively denote the exponential law and 
the Poisson law of parameter $\lambda>0$,  whereas ${\cal B}(p)$ will stand 
for the Bernoulli distribution of parameter $p\in (0,1)$.

Recall that the total variation distance between two probability measures
$\eta$ and $\tilde \eta$ in a measurable space $\cX$ is given by
\begin{equation}\label{eq:def-var-tot}
\NRM{\eta-\tilde \eta}_{\mathrm{TV}}=
\inf\BRA{\dP(X\neq \tilde X)\, :\, X,\tilde X 
\mbox{ random variables  with }\cL(X)=\eta,\ \cL(\tilde X)=\tilde \eta},
\end{equation}
where each pair of random elements  $(X,\tilde X)$ of $\cX$ is 
simultaneously constructed in some probability space and is called 
a {\it coupling} (see \cite{lindvall} for alternative definitions of this
distance and its main properties). A coupling  $(U_t, \tilde{U}_t)_{t\geq 0}$ 
of two stochastic processes such that    $U_{t+T_*}=\tilde{U}_{t+T_*}$ for 
any $t\geq 0$ and an almost surely finite random time $T_*$  is called a 
{\it coalescent coupling} ($T_*$ is  then called a {\it coupling time}). It 
follows in this case that
$$
\NRM{\cL(U_t)-\cL(\tilde{U}_t)}_\mathrm{TV}\leq \dP (T_*>t).
$$ 
A helpful notion in obtaining an  effective control of the distance is 
\emph{stochastic domination}:

\begin{defi}[\cite{lindvall}]
 Let $S$ and $T$ be two non-negative random variables with
 respective cumulative distribution functions $F$ and $G$. We
 say that $S$ is stochastically smaller than $T$ and we write 
 $S\leq_\mathrm{sto.}T$, if   $F(t)\geq G(t)$ for any $t\in\dR$. 
\end{defi}


In particular, for a couple $(U_t, \tilde{U}_t)$ as above,  Chernoff's inequality  yields
\begin{equation}\label{coupleineg}
\NRM{\cL(U_t)-\cL(\tilde{U}_t)}_\mathrm{TV}\leq \dP (T>t) \leq  
\dE\PAR{e^{\lambda T}} e^{ -\lambda t}
\end{equation}
for any non-negative random variable $T$ such that 
$T_*\leq_\mathrm{sto.}T$, and any $\lambda\geq 0$ in the domain 
of the Laplace transform   $\lambda \mapsto \dE\PAR{e^{\lambda T}}$ of $T$.  

We will use these ideas to obtain  the exponential convergence estimates 
for $Z=(Y,W)$   in Theorem \ref{th:no-reflection},  and in Theorem 
\ref{th:conv-reflection} below  for its reflected  version  $(X,V)$ which 
we now introduce. The Markov process ${((X_t,V_t))}_{t\geq 0}$ 
is defined by its infinitesimal generator:
\begin{equation}\label{eq:giabs}
Af(x,v)=v\partial_xf(x,v)+\PAR{a+(b-a)\ind_\BRA{v>0}
+\frac{\ind_\BRA{x= 0} }{\ind_\BRA{x> 0}}}(f(x,-v)-f(x,v)),
\end{equation}
with $0<a<b$ (the term $\ind_\BRA{x= 0}( \ind_\BRA{x> 0} )^{-1}$ 
means that $X$ is reflected at zero). The dynamics of the process
 is simple: when $X$ is increasing (resp. decreasing), $V$
 flips to $-V$ with rate $b$ (resp. $a)$ and it is reflected in
 the origin (\emph{i.e.} as soon as $X=0$, $V$ flips to 1). 
 Given a path ${((Y_t,W_t))}_{t\geq 0}$ driven by
\eqref{eq:geninfsimple},  a path of
${((X_t,V_t))}_{t\geq 0}$  can be constructed taking
  $$X_t=\ABS{Y_t}, \quad 
V_0=\mathrm{sgn }(Y_0)W_0$$ and  defining the set of
jump times of $V$ to be 
$$
\BRA{t>0\,:\, \Delta V_t \neq 0}=%
\BRA{t>0\,:\, \Delta W_t \neq 0}\cup \BRA{t>0\,:\,Y_t=0}.
$$
Notice that since $W$ does not jump with positive probability when
$Y$ hits the origin,  one can also construct a path of 
${((Y_t,W_t))}_{t\geq 0}$  from an initial value
$y\in\dR$ and a path ${((X_t,V_t))}_{t\geq 0}$ driven by
\eqref{eq:giabs}: writing  $\sigma_0=0$ and   ${(\sigma_i)}_{i\geq 1}$ for the successive
hitting times   of the origin, we  define
$$
(Y_t,W_t)=(-1)^{i}\mathrm{sgn}(y) (X_t,V_t)   \quad\text{if }t\in [\sigma_{i},\sigma_{i+1}].
$$
Let us state our results about the long time behavior of $(X,V)$. 
\begin{thm}\label{th:conv-reflection}
 The invariant measure of $(X,V)$ is the product
 measure on $\dR_+\times\BRA{-1,+1}$ given by
$$
\nu(dx,dv)=(b-a)e^{-(b-a)x}\,dx\otimes  \frac{1}{2}(\delta_{-1}+\delta_{+1})(dv).
$$
If $\nu_t^{x,v}$ stands for the law of $(X_t,V_t)$ when $X_0=x$ and $V_0=v$,
we have, for any $x,\tilde x\geq 0$ and $v,\tilde v\in\BRA{-1,+1}$,
\begin{equation}\label{eq:up-reflection}
\NRM{\nu_t^{x,v}-\nu_t^{\tilde x,\tilde v}}_{\mathrm{TV}}\leq %
 \frac{(a+b)b}{2a^2}e^{r(a,b)(x\vee\tilde x)} e^{-\lambda_c t},
 \end{equation}
 where
$$
r(a,b)=\frac{3(b-a)}{4} \vee (b-\sqrt{ab})
\quad\text{and}\quad
\lambda_c=\frac{a+b}{2}-\sqrt{ab}=\frac{(\sqrt{b}-\sqrt{a})^2}{2}.
$$
\end{thm}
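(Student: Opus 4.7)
My plan has three steps: identify the invariant measure, build a coalescent coupling, then bound the tail of the coupling time with the sharp exponent.

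The invariant measure $\nu$ can be checked directly by computing $\int Af\,d\nu = 0$ for $f$ in the domain of $A$, which under reflection means $f(0,+1)=f(0,-1)$. Writing $\lambda(+1)=b$, $\lambda(-1)=a$ and $\rho(x)=(b-a)e^{-(b-a)x}$, integration by parts of the transport term against $\rho$ produces a bulk contribution $v(b-a)\int f(x,v)\rho(x)\,dx$ and a boundary term at $0$; after summing over $v$ with weights $1/2$, the bulk terms combine with the jump part of $Af$ via the identity $v(b-a)=\lambda(-v)-\lambda(v)$, and the boundary terms cancel by the reflection condition. Uniqueness then follows from a standard PDMP irreducibility argument.

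For the quantitative bound, following the strategy announced in the paper I would build an explicit coalescent coupling $((X_t,V_t),(\tilde X_t,\tilde V_t))$ driven by \eqref{eq:giabs} from each initial datum. The state space is not naturally ordered and one has to couple both position and velocity: the key observation is that when $V_t=\tilde V_t$ the gap $|X_t-\tilde X_t|$ is frozen in time between reflections, so contractions of the gap must exploit either phases with opposite velocities (the gap then evolves at rate $\pm 2$) or the reflection at $0$, which universally resets the velocity to $+1$. The scheme I would implement synchronises the velocity-jump clocks whenever the velocities agree (so the pair stays parallel), and uses a carefully chosen joint law of jump times when they differ, to force one of the two processes to reach $0$ and be reflected. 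Coalescence occurs on the first joint visit to the origin with a common velocity, and $T_*$ is then bounded by an initial hitting time of $0$ starting from $x\vee\tilde x$, plus a geometric number of independent ``excursion trials'' until that joint event, each trial succeeding with probability bounded below uniformly in state.

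The analytic heart of the argument is the Laplace transform of the first hitting time $\tau_0$ of $0$ for $(X,V)$: setting $\psi_v(x)=\dE_{(x,v)}[e^{\lambda \tau_0}]$, a Dynkin-type martingale argument yields a two-dimensional linear first-order ODE system in $x$ with boundary data $\psi_{+1}(0)=\psi_{-1}(0)=1$ and a growth constraint at infinity. The characteristic equation of that system identifies $\lambda_c=(a+b)/2-\sqrt{ab}=(\sqrt b-\sqrt a)^2/2$ as the supremum of $\lambda$'s for which a nontrivial admissible solution exists, and produces the exponential factor $e^{r(a,b)x}$ in \eqref{eq:up-reflection}. Bounding the geometric sum of exponential moments of the successive excursion trials (each finite at $\lambda_c$, with a uniform lower bound on the success probability yielding the constant $(a+b)b/(2a^2)$) and inserting into Chernoff's inequality \eqref{coupleineg} gives the claimed estimate. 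The main obstacle is the interplay between the coupling design and this exponential-moment estimate: reaching the sharp rate $\lambda_c$ forbids losing any constant factor in the exponent per trial, so each alignment attempt must succeed with state-independent probability and its duration must have exponential moment finite exactly up to $\lambda_c$.
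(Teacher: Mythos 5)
Your verification of $\nu$ by testing $\int Af\,d\nu=0$ with the boundary condition $f(0,+1)=f(0,-1)$ is a legitimate alternative to the paper's regeneration argument, and your ODE route to the Laplace transform of the hitting time is also viable (it recovers $c(\lambda)=\frac{b-a-\sqrt{(a+b-2\lambda)^2-4ab}}{2}$ and hence $\lambda_c$ from the characteristic equation), although your boundary data are wrong: only $\psi_{-1}(0)=1$ holds, while $\psi_{+1}(0)=\dE_{(0,+1)}(e^{\lambda S})=\psi(\lambda)$ is an unknown determined by the system together with the growth condition at infinity, whose justification (selecting the root $c(\lambda)$ rather than the larger one) you would still have to supply.

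The genuine gap is in the tail bound for the coupling time. You propose to dominate $T_*$ by a hitting time of $0$ plus \emph{a geometric number of independent excursion trials, each succeeding with probability bounded below uniformly in state}, and to bound the resulting exponential moment at $\lambda=\lambda_c$. This cannot give the rate $\lambda_c$: if $G$ is geometric with success probability $p$ and the trial durations $D_i$ satisfy $\dE(e^{\lambda D_i})=m(\lambda)>1$, then $\dE\PAR{e^{\lambda\sum_{i=1}^G D_i}}=\sum_{k\geq 1}(1-p)^{k-1}p\,m(\lambda)^k$ is finite only when $(1-p)m(\lambda)<1$, so the domain of the Laplace transform of the total duration is \emph{strictly smaller} than that of a single trial; a uniform lower bound on $p$ does not rescue this, and the critical exponent you obtain solves $(1-p)m(\lambda)=1<\lambda_c$-wise. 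The obstacle is not, as you write, ``losing a constant factor in the exponent per trial,'' but the multiplicative accumulation of exponential moments over a random number of trials. The paper's construction is built precisely to avoid this: the sticking step (same position, opposite velocities, shared exponential clocks used in opposite order) coalesces \emph{deterministically} before the single hitting time $S_{(x,+1)}$ elapses, with no retrial; and the crossing step is organized so that the total time is dominated by $x/2+S_{(x/2,-1)}+\Sigma(x)$, where the number of embedded excursions is Poisson with parameter proportional to the initial gap $x$ (not geometric in the number of attempts), and the algebraic identity $\lambda+a(\psi(\lambda)-1)+c(\lambda)\leq b-a<a+b$ keeps every factor finite exactly up to $\lambda_c$. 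Without a coupling whose coalescence is guaranteed within a single excursion-scale event, your scheme will produce some rate strictly below $\lambda_c$, and the constant $\frac{(a+b)b}{2a^2}$, which in the paper comes from evaluating $\frac{(a+b)\psi(\lambda_c)^2}{2a+b-\lambda_c-a\psi(\lambda_c)-c(\lambda_c)}$, does not emerge from a success-probability bound.
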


Notice that for small times $t\leq \ABS{x-\tilde x}/2$ the total variation 
distance does not decrease exponentially fast: the distance between  
$\nu_t^{x,v}$ and $\nu_t^{\tilde x,\tilde v}$ is equal to 1 since 
the supports of these two probability measures are disjoint.    

Theorem \ref{th:conv-reflection} should be compared to results on  two classic examples of ergodic 
non-negative continuous time Markov processes,  obtained  by coupling
arguments that are  briefly recalled next. Consider first  Brownian motion with negative drift 
$-c<0$ reflected at the origin and  which  has  the law $\cE(2c)$ as
invariant measure  (see \cite{lindvall} for this and the following 
facts). A coupling of two of its copies $(U^x_t)_{t\geq 0}$  and 
$(U_t^{\tilde{x}})_{t\geq 0}$  respectively starting from 
$x$ and $\tilde{x}$ consists in letting them evolve 
independently until they are equal for the first time and choosing  them  equal from that moment on. By non-negativity and continuity the coupling time $T_*$ 
for  $(U^x_t, U_t^{\tilde{x}})_{t\geq 0}$ is stochastically smaller that 
the hitting   time $T$  of the origin for   ${(U_t^{x\vee \tilde x})}_{t\geq 0}$.
Since for   $y>0$  the   hitting time $T$  of the origin by $(U^y_t)_{t\geq 0}$   satisfies
$\dE_y\PAR{e^{\lambda T}}=\exp\PAR{y(c-\sqrt{c^2-2\lambda})}$  
if $\lambda\in (-\infty, c^2/2]$ and $\dE_y\PAR{e^{\lambda T}}=+\infty$ otherwise 
(see e.g.  \cite[p. 70]{revuz-yor}), taking $\lambda=c^2/2$ in 
\eqref{coupleineg} one gets
\begin{equation}\label{rateRBM}
  \NRM{\cL(U^{x}_t)-\cL( U^{\tilde x}_t)}_{\mathrm{TV}}\leq 
  e^{c(x\vee\tilde x)}  e^{-c^2t/2}  \quad\mbox{ for all }\,  x,\tilde x\in\dR_+, \, t\geq 0 .
 \end{equation}
This estimate can also  be used to study the long time behavior of 
the solution of the  SDE
\begin{equation}\label{eq:bang}
d\xi_t=dB_t-c \ \mathrm{sgn}( \xi_t)\,dt
\end{equation}
which has the Laplace law $ c e^{-2c\vert x\vert}dx$ as  invariant measure. 
One  first has to couple the absolute values; the  first hitting time 
of the origin after their coupling time stochastically  dominates the  coupling
time for \eqref{eq:bang}.  A second example is  the M/M/1
queue, that is the continuous time Markov process ${(N_t)}_{t\geq 0}$ 
taking values in  $\dN$ with infinitesimal generator
 $$
 \tilde A f(n)=a(f(n+1)-f(n))+b\ind_\BRA{n>0}(f(n-1)-f(n))
 $$ 
where $b>a>0$ (to ensure ergodicity). Since two independent copies 
of the process starting from $n$ and $\tilde n$ do not jump simultaneously
and they have  one unit long jumps, their coupling time is smaller 
than the hitting time of the origin $T$ for the process starting at 
$n\vee \tilde n$.  For each initial state $n\in \dN$ the Laplace transform of $T$  has domain 
$(-\infty, (\sqrt{b}-\sqrt{a})^2]$  and  we have
$\dE_n\PAR{e^{(\sqrt{b}-\sqrt{a})^2 T}}=
\PAR{\frac{b}{a}}^{n/2}$ (see \cite{robert} for these facts)
which as before yields 
  $$
  \NRM{\cL(N^{n}_t)-\cL(\tilde N^{\tilde n}_t)}_{\mathrm{TV}}\leq 
  \PAR{\frac{b}{a}}^{(n\vee \tilde n)/2}  e^{-(\sqrt{b}-\sqrt{a})^2 t}
  \quad \mbox{ for any }n,\tilde n\in\dN,  \, t\geq0. 
  $$
We notice that in the appropriate scaling limit,  the M/M/1 queue  is  furthermore known to  converge to
 the reflected Brownian motion with negative drift (see \cite{robert}).

The construction of  a coalescent coupling for the process $(X,V)$ 
driven by \eqref{eq:giabs} is  harder than the previous examples 
since both positions and velocities must be coupled at some time. 
This will be  done in two steps. In Section \ref{subsec:crossing} 
we will obtain an estimate (in the sense of stochastic domination) 
for the first crossing time and position of $X$ and $\tilde X$ for a suitable coupling 
of the pair. At that time the velocities will be different. We will then construct 
in Section~\ref{subsec:stick} the coalescent coupling when starting 
from that special configuration. In Section~\ref{subsec:coupl} we will 
obtain an explicit upper bound for the Laplace transform of the 
coalescent time,  and thus the  quantitative convergence bound 
\eqref{eq:var-no-reflection}. The required stochastic dominations 
will be established  in terms of hitting times and lengths of  excursions  of $(X,V)$ 
away from the origin. These hitting times   will be  previously studied in 
Section~\ref{sec:absolute}. We will also give therein a complete 
description of the  excursions and compute  thereby the  invariant measure  of $(X,V)$ using a standard 
regeneration argument. Finally, Theorem~\ref{th:no-reflection} will be 
proved in Section~\ref{sec:no-reflection}  by  transferring these results 
to the unreflected process.

 We end this section noting that $\lambda_c$  in \eqref{eq:var-no-reflection} 
is the right convergence rate for the process \eqref{eq:geninfsimple}, at least  in the natural  diffusive asymptotics of the process. Let  $c>0$ and 
$0<a_\N<b_\N, N\in \dN$ be real numbers. We have

\begin{prop}\label{difulimit}
Assume that  $a_\N+b_\N\to \infty$ and $b_\N-a_\N\to 2c\in (0,\infty)$ 
as $N \to \infty$.  Let  $(Y^{(\N)}_t, W^{(\N)}_t)_{t\geq 0}$ denote the 
process driven by \eqref{eq:geninfsimple} with coefficients $a=a_\N,b=b_\N$ 
and starting from a random variable $Y_0^{(\N)}=\xi_0\in \dR$. Then, as 
$N\to \infty$,  the process
$$
\left(\xi^{(\N)}_t\right)_{t\geq 0}:=\left(Y^{(\N)}_{t(a_\N+b_\N)/2}\right)_{t\geq 0}
$$   
converges in  law in $C([0,\infty), \dR)$  to the solution $\xi_t$ of the 
SDE \eqref{eq:bang} with initial condition~$\xi_0$.
\end{prop}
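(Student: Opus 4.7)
The plan is to identify a semimartingale decomposition of $\xi^{(N)}$ of the form ``drift $+$ martingale $+$ negligible term'', show that the drift converges to $-c\,\mathrm{sgn}(\xi_s)\,ds$ and the martingale to a standard Brownian motion, and then invoke well-posedness of the SDE \eqref{eq:bang} to conclude. The key trick is a perturbed test function adapted to the fast oscillations of $W^{(\N)}$. Write $\beta_\N=(a_\N+b_\N)/2\to\infty$ and $c_\N=(b_\N-a_\N)/2\to c$. One checks easily that the total jump rate in \eqref{eq:geninfsimple} satisfies $q(y,w):=a+(b-a)\ind_{\BRA{yw>0}}=\beta_\N+c_\N\,\mathrm{sgn}(y)w$, and hence
$$
q(y,w)\,w=\beta_\N w+c_\N\,\mathrm{sgn}(y).
$$

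\medskip

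Apply Dynkin's formula with the test function $f_\N(y,w)=y+w/(2\beta_\N)$. A direct computation gives
$$
Lf_\N(y,w)=w-\frac{q(y,w)\,w}{\beta_\N}=-\frac{c_\N}{\beta_\N}\,\mathrm{sgn}(y).
$$
Rescaling time by $\beta_\N$, this yields, for the process $\xi^{(\N)}_t=Y^{(\N)}_{t\beta_\N}$,
$$
\xi^{(\N)}_t=\xi_0+\frac{W_0-W^{(\N)}_{t\beta_\N}}{2\beta_\N}-c_\N\int_0^t\mathrm{sgn}\PAR{\xi^{(\N)}_s}\,ds+M^{(\N)}_t,
$$
where $M^{(\N)}$ is a purely discontinuous square-integrable martingale with predictable quadratic variation
$$
\langle M^{(\N)}\rangle_t=\int_0^t\frac{q\PAR{\xi^{(\N)}_s,W^{(\N)}_{s\beta_\N}}}{\beta_\N}\,ds\in \SBRA{\tfrac{a_\N}{\beta_\N}\,t,\,\tfrac{b_\N}{\beta_\N}\,t},
$$
and with jumps bounded by $1/\beta_\N$. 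Since $q/\beta_\N=1+(c_\N/\beta_\N)\mathrm{sgn}(y)w\to 1$ uniformly, one has $\langle M^{(\N)}\rangle_t\to t$.

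\medskip

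Next I would establish tightness of $\{\xi^{(\N)}\}$ in $C([0,\infty),\dR)$. The drift is Lipschitz with constant $c_\N$, which is bounded; the correction term is $O(1/\beta_\N)$; and the martingale family is $C$-tight by Rebolledo's theorem, since its jumps vanish uniformly and its brackets $\langle M^{(\N)}\rangle$ form a tight sequence of continuous processes. Along any convergent subsequence $\xi^{(\N)}\Rightarrow \xi$, the martingale CLT (e.g.\ Theorem VIII.3.11 in Jacod--Shiryaev) identifies the limit of $M^{(\N)}$ as a continuous martingale with bracket $t$, i.e.\ a standard Brownian motion $B$. Passing to the limit in the drift requires care because $\mathrm{sgn}$ is discontinuous at $0$, but the bounded integrands $\mathrm{sgn}(\xi^{(\N)}_\cdot)$ produce an equi-Lipschitz family of primitives, so any limit coincides with $\int_0^\cdot \mathrm{sgn}(\xi_s)\,ds$ provided one shows that the Lebesgue measure of $\{s:\xi_s=0\}$ vanishes a.s.; this is a standard a posteriori property of any limit (it can be checked, for instance, by using test functions of the form $(\epsilon+y^2)^{1/2}$ and letting $\epsilon\to0$, or by comparison with a reflected Brownian motion through the couplings of Section~\ref{sec:no-reflection}). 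The limit thus solves \eqref{eq:bang} starting from $\xi_0$, and pathwise uniqueness for this SDE (Veretennikov's theorem, the drift being bounded measurable) forces the whole sequence to converge.

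\medskip

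The main obstacle is not the algebra---the cancellation in $Lf_\N$ relies on the neat identity $q(y,w)w=\beta_\N w+c_\N\mathrm{sgn}(y)$---but rather the passage to the limit in the discontinuous drift $\mathrm{sgn}$, together with the derivation of a clean martingale representation from the PDMP structure. Both can be handled with the tools above, but they are the only points where one must be careful.
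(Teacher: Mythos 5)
Your proposal is correct and, at its core, coincides with the paper's argument: your perturbed test function $f_\N(y,w)=y+w/(2\beta_\N)$ is exactly the corrector $\hat Y_t=Y_t+\kappa W_t$ with $\kappa=(a_\N+b_\N)^{-1}$ used in the paper, and the resulting drift $-(c_\N/\beta_\N)\,\mathrm{sgn}(y)$ together with the bracket $\int_0^t q/\beta_\N\,ds$ are precisely the content of the two local martingales ($M_t$ and $M_t^2-[\cdots]$) that the paper extracts via Dynkin's theorem and integration by parts. Where you genuinely diverge is in how the limit is concluded. The paper feeds these two martingale identities into the diffusion-approximation theorem of Ethier and Kurtz (Theorem 4.1, Chapter 7 of \cite{ethier}), which bundles tightness, identification of the limit and uniqueness into the verification of conditions (4.1)--(4.7); you instead assemble the conclusion by hand from Rebolledo's criterion, the martingale central limit theorem, an explicit treatment of the discontinuous drift, and pathwise uniqueness for the limiting SDE. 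Your route is longer but more self-contained, and it has the merit of isolating the one genuinely delicate point, namely passing to the limit in $\int_0^t\mathrm{sgn}(\xi^{(\N)}_s)\,ds$, which requires $\mathrm{Leb}\,\{s\le t:\xi_s=0\}=0$ almost surely for any limit point --- an issue the paper's citation quietly absorbs. For that step, a cleaner argument than the $(\epsilon+y^2)^{1/2}$ test functions you suggest is the occupation times formula: any limit point is automatically a continuous semimartingale (the drift terms are uniformly Lipschitz in $t$, hence so is their limit, and the martingale part is a Brownian motion), with $\langle\xi\rangle_t=t$, so that $\int_0^t\ind_{\{\xi_s=0\}}\,ds=\int_\dR\ind_{\{x=0\}}L_t^x\,dx=0$. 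With that observation supplied, and the standard (but worth stating) remark that the limiting $B$ must be a Brownian motion with respect to the filtration generated by the pair $(\xi,B)$ so that one really obtains a weak solution of \eqref{eq:bang}, your proof is complete.
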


\begin{rem}
By Theorem \ref{th:no-reflection} and the dual representation of the 
total variation distance, 
$$
| \dE(f(\xi^{(\N),y}_t)- \dE(f(\xi^{(\N),\tilde y})) | \leq 
C(a_\N,b_\N) 
e^{r(a_\N,b_\N) (\ABS{y }\vee \ABS{ \tilde y}) } 
\exp \left\{- (a_\N+b_\N) \frac{(\sqrt{b_\N}-\sqrt{a_\N})^2}{4}  t\right\}
$$
holds for every $t>0$ and each continuous function $f:\dR \to [-1,+1]$. 
Letting $N\to \infty$ and  taking then supremum over even functions $f$ 
in the previous class,  we then get that 
\begin{equation*}  \NRM{\cL(U^{x}_t)-\cL( U^{\tilde x}_t)}_{\mathrm{TV}}
\leq e^{3c(x\vee\tilde x)/2}  e^{-c^2t/2}  
\quad\mbox{ for all }\,  x,\tilde x\in\dR_+, \, t\geq 0,
 \end{equation*}
where $(U^x_t)_{t\geq 0}$  and 
$(U_t^{\tilde{x}})_{t\geq 0}$    are Brownian motions with drift 
$-c<0$ reflected at the origin,  respectively starting from 
$x$ and $\tilde{x}$.  
Comparison with  \eqref{rateRBM} suggests that the convergence rate 
$\lambda_c$ of Theorem \ref{th:no-reflection} cannot be substantially 
improved on,  and that one could in principle  improve upon   the exponent  $r(a,b)$ therein (more precisely,  upon  the term $3(b-a)/4$). \end{rem}

\begin{proof} We will use a standard diffusion approximation argument. 
Omitting  for a moment the sub and superscripts for notational 
simplicity, and writing 
$j_t:=  W_t -2\kappa W_t (a+(b-a)\ind_\BRA{Y_t W_t>0}) $,   
$J_t: =\int_0^t j_s ds$ and $\hat{Y}_t:=Y_t+ \kappa W_t$ for a given 
constant $\kappa >0$, we see by Dynkin's theorem that the processes 
$M_t:= \hat{Y}_t-J_t $ and 
$N_t:= \hat{Y}_t^2-t 2\kappa  -  \int_0 ^t 2Y_s j_s ds$ are local 
martingales with respect to the  filtration generated by $(Y_t,W_t)$. 
Using integration by parts we then get that 
$M_t^2= N_t - 2\int_0^t J_{s-} dM_s +2\kappa t -2\kappa\int_0^t W_s j_s ds$.  
Thus, noting that  
$j_s=\mathrm{sgn}(Y_s) \left( (2a\kappa -1)  
+2\times \ind_\BRA{Y_sW_s>0}(1-\kappa (b+a))\right)$,  
we see for $\kappa=(a+b)^{-1}$ that 
$$
M_t=Y_t- \left[ \int_0 ^t \mathrm{sgn}(Y_s) 
\left(\frac{a-b}{a+b}\right)ds  -\frac{W_t}{a+b}\right] \, , 
\, M_t^2- \left[\frac{2t}{a+b}  -2\int_0^t W_s\, 
\mathrm{sgn}(Y_s)  \frac{a-b}{(a+b)^2}  ds  \right]
$$
are local martingales. Therefore,  defining for each $N\in \dN$ 
$$
\beta^{(\N)}_t:=  \frac{(a_\N- b_\N)}{2} \int_0^t 
\mathrm{sgn}(\xi_s^{(\N)})ds - \frac{W^{(\N)}_{t(a_\N+b_\N)/2}}{a_\N+b_\N}
$$
and 
$$
\alpha^{(\N)}_t:= t- \frac{(a_\N-b_\N)}{a_\N+ b_\N} 
\int_0^t W^{(\N)}_{s(a_\N+b_\N)/2}\,  \mathrm{sgn}(\xi_s^{(\N)})ds \, , 
$$
we readily check that the processes $\xi^{(\N)}_t, \alpha^{(\N)}_t$ and 
$\beta^{(\N)}_t$ satisfy assumptions (4.1) to (4.7) of Theorem 4.1 in 
\cite[p. 354]{ethier} (in the respective roles of the processes $X_n(t),A_n(t)$ 
and $B_n(t)$ therein). That result ensures  that  $\cL(\xi^{(\N)})$ 
converges weakly to the unique solution of the martingale problem with 
generator $Gf(x):= \frac{1}{2}f''(x) -c\, \mathrm{sgn}(x)f'(x), 
f\in C_c^{\infty}(\dR) $ and initial law $\cL(\xi_0)$. 
\end{proof}

\section{The invariant measure of the reflected process}\label{sec:absolute}

In this section  we will determine the invariant measure of $(X,V)$.
This process is clearly positive recurrent since, as will be shown in
the sequel, the Laplace transform of the hitting time of $(0,+1)$ is
finite on a neighborhood of the origin, whatever the initial data are. 
We will need the following well-known results for Poisson processes.

\begin{prop}[\cite{norris}]\label{prop:poisson}
Let ${(N_t)}_{t\geq 0}$ be a Poisson process with intensity
 $\lambda>0$. Denote by ${(T_n)}_{n\geq 1}$ its jump times. 
Then,  $N_t\sim\cP(\lambda t)$ for any $t\geq 0$. Moreover,
conditionally on  $\BRA{N_t=k}$, the jump times
$T_1,T_2,\ldots, T_k$ have the same distribution than an
ordered sample of size $k$ from the uniform distribution
on $[0,t]$.
\end{prop}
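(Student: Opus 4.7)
The plan is to reduce both assertions to elementary calculations based on the interarrival-time representation of the Poisson process: its jump times can be written $T_n=S_1+\cdots+S_n$, where $(S_i)_{i\geq 1}$ are i.i.d.\ $\cE(\lambda)$ random variables.

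First I would write down the joint density of $(T_1,\ldots,T_n)$ on the open simplex $\BRA{0<t_1<\cdots<t_n}$. Since the linear map $(s_1,\ldots,s_n)\mapsto (s_1, s_1+s_2,\ldots, s_1+\cdots+s_n)$ has unit Jacobian, this density equals
$$
\lambda^n e^{-\lambda t_n}\ind_{\BRA{0<t_1<\cdots<t_n}}.
$$
To derive the marginal law, I would use the identity $\BRA{N_t=n}=\BRA{T_n\leq t}\cap\BRA{S_{n+1}>t-T_n}$ together with the independence of $S_{n+1}$ from $(T_1,\ldots,T_n)$, obtaining
$$
\dP(N_t=n)=\int_{0<t_1<\cdots<t_n\leq t}\lambda^n e^{-\lambda t_n}\,e^{-\lambda(t-t_n)}\,dt_1\cdots dt_n
=e^{-\lambda t}\frac{(\lambda t)^n}{n!},
$$
which is precisely the $\cP(\lambda t)$ distribution.

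For the second assertion I would observe that the integrand above is \emph{constant} in $(t_1,\ldots,t_n)$ on the simplex and equals $\lambda^n e^{-\lambda t}$. By Bayes' formula, the conditional density of $(T_1,\ldots,T_n)$ given $\BRA{N_t=n}$ is therefore $n!/t^n$ on $\BRA{0<t_1<\cdots<t_n\leq t}$, which is exactly the density of the order statistics of $n$ i.i.d.\ $\cU([0,t])$ random variables.

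There is no real obstacle here: both statements reduce to the single integration above. The only point to handle carefully is the event $\BRA{T_n\leq t<T_{n+1}}$, which must be decomposed by conditioning on $(T_1,\ldots,T_n)$ before integrating out the independent exponential $S_{n+1}$; once that is done the two conclusions fall out of the same computation.
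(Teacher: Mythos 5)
Your argument is correct and complete: the unit-Jacobian change of variables from the i.i.d.\ $\cE(\lambda)$ interarrival times gives the joint density $\lambda^n e^{-\lambda t_n}$ on the simplex, integrating out the independent $(n+1)$-st interarrival time over $\BRA{T_n\leq t<T_{n+1}}$ yields the $\cP(\lambda t)$ marginal, and the constancy of the integrand on the simplex immediately gives the order-statistics form of the conditional law. The paper does not prove this proposition at all --- it simply cites Norris --- and your proof is precisely the standard argument found in that reference, so there is nothing to add.
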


\subsection{Excursion and hitting times}

We start by computing  the Laplace transforms of the length 
of an excursion (to be defined next) and of the hitting times 
of the origin when starting from $(x,v)\in\dR_+\times \BRA{-1,+1}$.
\begin{defi}
 An excursion of $(X,V)$ driven by \eqref{eq:giabs} is a
 path starting at $(0,+1)$ and stopped at
 $$
 S=\inf\BRA{t>0\, :\, X_t=0}.
 $$
We  denote by $\psi$ the Laplace transform of $S$:
 \begin{equation}\label{eq:def-psi}
\psi\,:\,\lambda\in\dR\mapsto \psi(\lambda)=\dE_{(0,+1)}\PAR{e^{\lambda S}}.
 \end{equation}
\end{defi}
Notice that $\lim_{t\rightarrow S^-}V_t=-1$ and $V_S=1$.

\begin{lem}[Length of an excursion.]\label{le:laplaceS}
 The domain of $\psi$ defined in \eqref{eq:def-psi} is equal
  to $(-\infty,\lambda_c]$ where
\begin{equation} \label{eq:lc}
\lambda_c= \frac{a+b}{2}-\sqrt{ab}=\frac{(\sqrt{b}-\sqrt{a})^2}{2}.
\end{equation}
Furthermore, if $\lambda\leq \lambda_c$,
\begin{equation}\label{eq:psi}
\psi(\lambda)=
\frac{a+b-2\lambda-\sqrt{(a+b-2\lambda)^2-4ab}}{2a}.
\end{equation}
In particular, $\psi(\lambda_c)=\sqrt{b/a}$ and $\dE_{(0,+1)}(S)=2/(b-a)$.
\end{lem}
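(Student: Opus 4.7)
The plan is to compute $\psi$ by decomposing the excursion $S$ according to its first velocity flip, and relating it to the Laplace transform of the hitting time of the origin from a state $(x,-1)$. Starting from $(0,+1)$, the process moves up at unit speed until the first flip at time $U\sim\cE(b)$, reaching $(U,-1)$; by the strong Markov property, conditionally on $U$ the remaining time $S-U$ is distributed as the first hitting time $T_{0}$ of the origin from $(U,-1)$, and is independent of $U$. Setting $\phi(x,\lambda):=\dE_{(x,-1)}(e^{\lambda T_{0}})$, we thus have $\psi(\lambda)=\dE(e^{\lambda U}\phi(U,\lambda))$, and it suffices to compute $\phi$.

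The key structural observation is that $\phi(\cdot,\lambda)$ is multiplicative in $x$. Indeed, starting from $(x_{1}+x_{2},-1)$, continuity of $(X_{t})$ forces the process to first cross the level $x_{2}$ with velocity $-1$; by translation invariance of the generator away from the origin, the time of this first passage has the distribution of $T_{0}$ from $(x_{1},-1)$, and the strong Markov property at that time yields that the remaining hitting time is independent and distributed as $T_{0}$ from $(x_{2},-1)$. Hence $\phi(x,\lambda)=e^{h(\lambda)x}$ for some function $h(\lambda)\in\dR\cup\BRA{+\infty}$ with $h(0)=0$. I would then determine $h$ by a short first-step analysis from $(x,-1)$: either no flip occurs in time $x$ (contribution $e^{(\lambda-a)x}$), or a flip occurs at some $\sigma<x$, after which the process restarts from $(x-\sigma,+1)$. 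Using the multiplicative form of $\phi$, integrating against the exponential laws of the flip times of $V$, and matching the two exponentials in $x$ produces the quadratic
\[
h(\lambda)^{2}-(b-a)\,h(\lambda)+\lambda(a+b-\lambda)=0.
\]
Selecting the root with $h(0)=0$ gives
\[
h(\lambda)=\frac{(b-a)-\sqrt{(b-a)^{2}-4\lambda(a+b-\lambda)}}{2},
\]
which is real precisely when $(a+b-2\lambda)^{2}-4ab\geq 0$, i.e.\ $\lambda\leq\lambda_{c}=(a+b)/2-\sqrt{ab}$.

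Plugging $\phi(U,\lambda)=e^{h(\lambda)U}$ into $\psi(\lambda)=\dE(e^{(\lambda+h(\lambda))U})$ with $U\sim\cE(b)$ yields
\[
\psi(\lambda)=\frac{b}{b-\lambda-h(\lambda)},
\]
valid as long as $b-\lambda-h(\lambda)>0$, which is the case on $(-\infty,\lambda_{c}]$. A short manipulation using the conjugate identity $(a+b-2\lambda+\sigma)(a+b-2\lambda-\sigma)=4ab$ with $\sigma:=\sqrt{(a+b-2\lambda)^{2}-4ab}$ converts this into the stated form $\psi(\lambda)=(a+b-2\lambda-\sigma)/(2a)$. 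Evaluating at $\lambda_{c}$ (where $\sigma=0$) gives $\psi(\lambda_{c})=\sqrt{b/a}$, and differentiating at $\lambda=0$ gives $\dE_{(0,+1)}(S)=\psi'(0)=2/(b-a)$. For $\lambda>\lambda_{c}$ the same first-step decomposition produces a divergent lower bound for $\psi$, pinning the domain down to $(-\infty,\lambda_{c}]$.

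The main obstacle will be the rigorous justification of the multiplicative identity $\phi(x_{1}+x_{2},\lambda)=\phi(x_{1},\lambda)\phi(x_{2},\lambda)$, which rests on path continuity of $X$, the strong Markov property at the first passage through level $x_{2}$, and translation invariance of the PDMP generator in the interior $(0,\infty)$; identifying the correct root of the quadratic for $h$ (the smaller one, singled out by the condition $h(0)=0$) is the other point requiring care. Once these two ingredients are in place the rest is routine algebra.
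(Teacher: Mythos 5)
Your argument is correct, but it follows a genuinely different route from the paper's. The paper decomposes the whole excursion as $S \overset{\cL}{=} 2E + \sum_{k=1}^N S_k$, where $E\sim\cE(b)$ is the ascent time and, conditionally on $E$, $N\sim\cP(aE)$ counts the independent sub-excursions embedded in the descent (this uses the Poisson structure of the downward flips, Proposition~\ref{prop:poisson}); taking Laplace transforms gives directly the quadratic $a\psi^2-(a+b-2\lambda)\psi+b=0$, whose relevant root is selected because $\psi$ is a Laplace transform. You instead write $S=U+T_0$ with $U\sim\cE(b)$ and first study $\phi(x,\lambda)=\dE_{(x,-1)}\PAR{e^{\lambda T_0}}$: its multiplicativity in $x$ is exactly the content of Lemma~\ref{lem:somme} (which the paper states and proves \emph{after} the present lemma), and your exponent $h(\lambda)$ is the paper's $c(\lambda)$ of \eqref{eq:cl}, which the paper instead deduces \emph{from} $\psi$ via the decomposition $S_{(x,-1)}\overset{\cL}{=}x+\sum_{k=1}^NS_k$ with $N\sim\cP(ax)$. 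So you in effect prove the subsequent lemma first and recover $\psi(\lambda)=b/(b-\lambda-h(\lambda))$ from it; your renewal equation does close to $ab=(a+h-\lambda)(b-h-\lambda)$, i.e.\ $h^2-(b-a)h+\lambda(a+b-\lambda)=0$, whose discriminant coincides with $(a+b-2\lambda)^2-4ab$, and the conjugate-root manipulation giving the stated form of $\psi$ is correct. What the paper's route buys is the explicit excursion decomposition itself, which is reused heavily later (Lemma~\ref{F}, the invariant measure, the variables $\Sigma(x)$ in the coupling); what yours buys is independence from the Poisson thinning fact and a more classical first-passage flavour. Two points deserve a bit more care than you give them: selecting the root of the quadratic for $h$ requires continuity (or convexity) of $\lambda\mapsto h(\lambda)$ on the interior of the domain, not merely $h(0)=0$, since both branches are admissible pointwise; and for the domain, the clean statement is that the functional equation forces the discriminant to be nonnegative, so the domain is contained in $(-\infty,\lambda_c]$, while $\lambda_c$ itself belongs to it by monotone convergence, $\dE\PAR{e^{\lambda_c S}}=\lim_{\lambda\uparrow\lambda_c}\psi(\lambda)=\sqrt{b/a}<\infty$ --- your ``divergent lower bound'' phrasing is vaguer, though the paper is comparably terse at this step.
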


\begin{proof}
 During a time length $E$ of
law $\cE(b)$, $V$ is equal to 1 and $X$ grows linearly. At time $t=E$,
 $V$ flips to $-1$ and $X$ starts going down. Denote
 by $T_2$ the second jump time of $V$. If $X_{T_2}=0$,
then $S=2E$. Otherwise, $X$ starts a new excursion
above $X_{T_2}$ which has the same law as $S$ and 
is independent of the past. After this
 excursion, $(X,V)$ is equal to $(X_{T_2},-1)$. Once again, it
 reaches 0 directly or $V$ flips to 1 before doing so, in which case
 a new independent excursion begins. Proposition \ref{prop:poisson} and the 
 strong Markov property ensure that, conditionally on $\BRA{E=x}$, 
 the number $N$ of embedded excursions has the law $\cP(ax)$. 
We thus can decompose $S$ as 
 $$
S\overset{\cL}{=}2E+\sum_{k=1}^NS_k,
$$
where  $E\sim \cE(b)$, $\cL(N\vert E=x)=\cP(ax)$ and
${(S_k)}_{k\geq 1}$ is an  i.i.d. sequence of  random
variables distributed as $S$ and independent of the couple
$(E,N)$. As a consequence,
\begin{align*}
\psi(\lambda)&=
\dE\PAR{\dE\PAR{e^{2\lambda E+\lambda\sum_{k=1}^NS_k}\vert E,N}}
=\dE\PAR{e^{2\lambda E} \dE\PAR{\psi(\lambda)^N\vert E}}\\
&=\dE\PAR{e^{2\lambda E}e^{aE(\psi(\lambda)-1)}}
=\frac{b}{b+a-2\lambda-a\psi(\lambda)}
\end{align*}
for each $\lambda$ in the domain of $\psi $ (which contains $(-\infty,0]$).
 This implies (since $\psi$ is a Laplace transform) that
$$
\psi(\lambda)=\frac{a+b-2\lambda-\sqrt{(a+b-2\lambda)^2-4ab}}{2a}.
$$
The relation is in fact valid as soon as the argument of the 
square root is non-negative
\emph{i.e.} as soon as $\lambda\leq \lambda_c$ with $\lambda_c$
defined in \eqref{eq:lc}. At last,
$\dE_{(0,+1)}(S)=\psi'(0)=2/(b-a)$.
\end{proof}

\begin{rem}[Number of jumps in an excursion]

Since each 
 excursion is preceded by a jump, the number $M$  
 of jumps of $V$ during an excursion  (omitting the 
 jump at time $S$)  satisfies 
 $$
M\overset{\cL}{=} 1+\sum_{i=1}^N(1+M_i)
$$
where 
 ${(M_i)}_{i\geq 0}$ is an i.i.d.  sequence with the same law as
 $M$ and  independent of the random variable $N$ such that $\cL(N\vert E)=\cP(aE)$ with 
 $E\sim\cE(b)$.  By conditioning first in $E,N$ as in the previous proof, one can easily  derive a second degree equation and then an explicit expression for the Laplace transform of the number of jumps. We omit the details since this result will not be needed.

\end{rem}

\begin{lem}
 For  $x> 0$,  let  $S_{(x,-1)}$ denote the hitting time of 0 starting 
 from $(x,-1)$. Then
 \begin{equation}\label{eq:cl}
 \dE\PAR{e^{\lambda S_{(x,-1)}}}=e^{xc(\lambda)}
\quad\text{with} \quad
c(\lambda)=\frac{b-a-\sqrt{(a+b-2\lambda)^2-4ab}}{2} 
\end{equation}
if $\lambda \in (-\infty,\lambda_c]$,  and $+\infty$ otherwise. 
\end{lem}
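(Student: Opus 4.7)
My plan is to combine a multiplicative structure in the starting point $x$ with a one-step renewal identity obtained by conditioning on the first flip of $V$.

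Set $\phi(x,\lambda):=\dE\PAR{e^{\lambda S_{(x,-1)}}}$, first for $\lambda\leq 0$ where finiteness is automatic. For $x,y>0$, consider the process issued from $(x+y,-1)$ and let $\tau_x:=\inf\BRA{t>0:X_t=x}$. Since $X$ has unit speed and starts strictly above $x$, continuity forces $X_t>x$ on $[0,\tau_x)$ and hence $V_{\tau_x}=-1$. Because the generator \eqref{eq:giabs} is spatially invariant on $\BRA{x>0}$, $\tau_x$ has the law of $S_{(y,-1)}$; the strong Markov property at $\tau_x$ then yields $S_{(x+y,-1)}\overset{\cL}{=}S_{(y,-1)}+S'_{(x,-1)}$ with independence. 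Consequently $\phi(x+y,\lambda)=\phi(x,\lambda)\phi(y,\lambda)$, and measurability of $x\mapsto\phi(x,\lambda)$ forces $\phi(x,\lambda)=e^{xc(\lambda)}$ for some function $c$.

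To identify $c(\lambda)$, I would condition on the first jump time $E\sim\cE(a)$ of $V$ starting from $(x,-1)$: on $\BRA{E\geq x}$ the origin is hit at time $x$; on $\BRA{E<x}$ the state at time $E$ is $(x-E,+1)$, from which an independent excursion (of Laplace transform $\psi(\lambda)$ by Lemma~\ref{le:laplaceS}) brings the state to $(x-E,-1)$, and the remaining time is an independent copy of $S_{(x-E,-1)}$. This produces the renewal equation
\begin{equation*}
\phi(x,\lambda)=e^{(\lambda-a)x}+a\psi(\lambda)\int_0^x e^{(\lambda-a)t}\phi(x-t,\lambda)\,dt.
\end{equation*}
Plugging the exponential ansatz $\phi(x,\lambda)=e^{xc(\lambda)}$ and expanding at $x=0$ (or integrating explicitly and matching the resulting equation) gives $c(\lambda)=\lambda-a(1-\psi(\lambda))$. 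Substituting the explicit expression of $\psi$ from Lemma~\ref{le:laplaceS} and simplifying then recovers the announced formula \eqref{eq:cl}.

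For the domain, I would extend the identity from $\lambda\leq 0$ to $\lambda\in(-\infty,\lambda_c]$ by monotone convergence, yielding $\phi(x,\lambda)=e^{xc(\lambda)}<\infty$ with $c(\lambda_c)=(b-a)/2$. For $\lambda>\lambda_c$, Lemma~\ref{le:laplaceS} gives $\dE(e^{\lambda S})=+\infty$, while the above decomposition exhibits with positive probability a full embedded excursion inside $[0,S_{(x,-1)}]$; hence $S\leq S_{(x,-1)}$ on a set of positive probability and $\phi(x,\lambda)=+\infty$ follows. The main obstacle I anticipate is the rigorous extension of the formula from $\lambda\leq 0$ to the closed strip $(-\infty,\lambda_c]$, which I expect to follow from analyticity of $\phi(x,\cdot)$ on the open part of its domain together with monotone convergence at $\lambda_c$.
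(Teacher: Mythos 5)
Your proof is correct in substance but takes a genuinely different route from the paper. The paper's argument is a one-shot compound-Poisson decomposition: since the downward motion accumulates exactly $x$ units of time and the velocity flips at rate $a$ while descending, Proposition~\ref{prop:poisson} gives directly $S_{(x,-1)}\overset{\cL}{=}x+\sum_{k=1}^N S_k$ with $N\sim\cP(ax)$ independent of the i.i.d.\ excursion lengths $S_k$, whence $\dE\PAR{e^{\lambda S_{(x,-1)}}}=e^{\lambda x}\dE\PAR{\psi(\lambda)^N}=e^{x(\lambda+a(\psi(\lambda)-1))}$ --- an identity in $[0,+\infty]$ valid for every $\lambda$, which settles the value and the domain simultaneously. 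You instead combine Cauchy's functional equation (via the strong Markov property at intermediate levels, which is exactly Lemma~\ref{lem:somme}) with a one-step renewal equation at the first flip; both of your ingredients are sound, the renewal equation is the single-step version of the paper's decomposition, and your identification $c(\lambda)=\lambda-a(1-\psi(\lambda))$ agrees with the paper's. The one loose end is the extension from $\lambda\leq 0$ to $(0,\lambda_c]$: analyticity of $\phi(x,\cdot)$ on the interior of its domain plus the identity theorem only yields the formula up to $\min(\lambda_{\max},\lambda_c)$, and monotone convergence does not by itself rule out $\lambda_{\max}<\lambda_c$; you need the Landau--Pringsheim fact that the abscissa of convergence of the Laplace transform of a nonnegative random variable is a singular point of any analytic continuation (so $\lambda_{\max}\geq\lambda_c$ since $e^{xc(\cdot)}$ is analytic on $(-\infty,\lambda_c)$). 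Alternatively, iterating your own renewal decomposition collapses it into the paper's compound-Poisson form and removes the issue entirely. Your argument for divergence when $\lambda>\lambda_c$ (a full excursion is embedded in $S_{(x,-1)}$ with positive probability) is exactly right.
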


\begin{proof}
 As in the proof of Lemma \ref{le:laplaceS}, one can decompose $S_{(x,-1)}$ as
 $$
 S_{(x,-1)}\overset{\cL}{=}x+\sum_{k=1}^N S_k,
 $$
 where $N$ is a random variable with law $\cP(ax)$ independent of 
 the i.i.d. sequence of  random variables ${(S_k)}_{k\geq 1}$ with 
 Laplace transform $\psi$. Then,
 $$
 \dE\PAR{e^{\lambda S_{(x,-1)}}}
 =\sum_{k\geq 1}\dE\PAR{e^{\lambda (x+S_1+S_2+\cdots+S_k)}
 \ind_\BRA{N=k}}
 =e^{ax(\psi(\lambda)-1)+\lambda x}.
$$
At last, $a(\psi(\lambda)-1)+ \lambda$ is equal to $c(\lambda)$.
\end{proof}

\begin{cor}
 For any $x\geq 0$, let us denote by $S_{(x,+1)}$ the hitting 
 time of 0 starting from $(x,+1)$. Then
$$
 \dE\PAR{e^{\lambda S_{(x,+1)}}}= \psi(\lambda)e^{xc(\lambda)},
 $$
 where $\psi$ is given by \eqref{eq:psi}  and $c(\lambda)$ by \eqref{eq:cl}.
\end{cor}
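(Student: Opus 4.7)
The plan is to condition on the first flip of the velocity. Starting from $(x,+1)$ with $x\geq 0$, the component $V$ is equal to $+1$ and $X_t=x+t$ is strictly increasing, so $X$ cannot reach the origin before that first flip. In the region $\BRA{x>0}$ with $V=+1$ the jump rate of $V$ is $b$, hence the first flip occurs at a time $E\sim\cE(b)$, at which the process is at $(x+E,-1)$.

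Applying the strong Markov property at this flip, I would write
\begin{equation*}
S_{(x,+1)}\overset{\cL}{=}E+\widetilde S_{(x+E,-1)},
\end{equation*}
where, conditionally on $E$, the variable $\widetilde S_{(x+E,-1)}$ has the law of the hitting time of $0$ starting from $(x+E,-1)$ and is independent of $E$. Using the previous lemma to evaluate the inner expectation,
\begin{equation*}
\dE\PAR{e^{\lambda S_{(x,+1)}}}
=\dE\PAR{e^{\lambda E}\,\dE\PAR{e^{\lambda \widetilde S_{(x+E,-1)}}\,|\,E}}
=\dE\PAR{e^{\lambda E}e^{(x+E)c(\lambda)}}
=e^{xc(\lambda)}\,\dE\PAR{e^{(\lambda+c(\lambda))E}}.
\end{equation*}
For $\lambda\leq\lambda_c$ one checks that $\lambda+c(\lambda)<b$, so the remaining expectation equals $b/(b-\lambda-c(\lambda))$; for $\lambda>\lambda_c$ the previous lemma already gives $+\infty$, so both sides are infinite and the identity is trivial.

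It then remains to verify algebraically that $b/(b-\lambda-c(\lambda))=\psi(\lambda)$. Plugging in \eqref{eq:cl}, the denominator becomes $\tfrac{1}{2}\bigl(a+b-2\lambda+\sqrt{(a+b-2\lambda)^2-4ab}\bigr)$; rationalizing by multiplying by the conjugate and using the identity $(a+b-2\lambda)^2-\bigl((a+b-2\lambda)^2-4ab\bigr)=4ab$ produces exactly the right-hand side of \eqref{eq:psi}. This is the only real computation involved, and I do not expect any serious obstacle: the only points requiring care are the justification that $X$ stays strictly positive up to the first flip (immediate from $V=+1$), and the tracking of the domain condition $\lambda\leq\lambda_c$ to ensure that the exponential moment of $E$ is finite and that $c(\lambda)$ is real.
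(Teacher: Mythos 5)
Your argument is correct, but it takes a different route from the paper's. The paper's proof is one line: by translation invariance of the dynamics away from the origin and the strong Markov property, the time for the process started at $(x,+1)$ to first return to level $x$ is distributed as the excursion length $S$ and is independent of the subsequent descent, so $S_{(x,+1)}\overset{\cL}{=}S+S_{(x,-1)}$ and the product formula $\psi(\lambda)e^{xc(\lambda)}$ is immediate. You instead condition on the first velocity flip, writing $S_{(x,+1)}\overset{\cL}{=}E+S_{(x+E,-1)}$ with $E\sim\cE(b)$, which leads to $e^{xc(\lambda)}\,b/(b-\lambda-c(\lambda))$ and then requires the identity $b/(b-\lambda-c(\lambda))=\psi(\lambda)$. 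That identity is genuine: your rationalization works since $b-\lambda-c(\lambda)=\tfrac12\bigl(a+b-2\lambda+\sqrt{(a+b-2\lambda)^2-4ab}\bigr)$, and it can also be seen without computation because $c(\lambda)=\lambda+a(\psi(\lambda)-1)$, so $b-\lambda-c(\lambda)=a+b-2\lambda-a\psi(\lambda)$ and the identity is exactly the fixed-point equation $\psi(\lambda)=b/(a+b-2\lambda-a\psi(\lambda))$ established in the proof of Lemma~\ref{le:laplaceS}. Your domain bookkeeping is also sound: $\lambda+c(\lambda)\leq b-\sqrt{ab}<b$ for $\lambda\leq\lambda_c$, so $\dE\bigl(e^{(\lambda+c(\lambda))E}\bigr)$ is finite. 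The trade-off is that the paper's decomposition buys brevity and makes the factorization $\psi(\lambda)e^{xc(\lambda)}$ structurally transparent, while yours is more self-contained (it only uses the first-jump structure and the preceding lemma) at the cost of one algebraic verification.
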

\begin{proof}
The strong Markov property implies that $S_{(x,+1)}\overset{\cL}{=}S+S_{(x,-1)}$ 
where $S$ is the length of an excursion independent from $S_{(x,-1)}$.
\end{proof}

\begin{lem}\label{lem:somme}
 For any $x,\tilde x\geq 0$,
 $$
 S_{(x+\tilde x,-1)}\overset{\cL}{=} 
 S_{(x,-1)}+ S_{(\tilde x,-1)}\geq_{\mathrm{sto.}} S_{(x,-1)},
 $$
 where $S_{(x,-1)}$ and $S_{(\tilde x,-1)}$ are independent.
\end{lem}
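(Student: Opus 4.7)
My plan is to exploit the spatial homogeneity of the dynamics \eqref{eq:giabs} on the open half-line $\{x>0\}$ together with the strong Markov property. The degenerate cases $x=0$ or $\tilde x=0$ are trivial since $S_{(0,-1)}=0$, so I would assume $x,\tilde x>0$ and work with a process $(X_t,V_t)$ issued from $(x+\tilde x,-1)$. I would introduce the intermediate hitting level $\tilde x$ through the stopping time
$$
T:=\inf\BRA{t>0\,:\, X_t=\tilde x}.
$$

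The first step is to identify $T\overset{\cL}{=}S_{(x,-1)}$. This follows from the observation that the jump rates in \eqref{eq:giabs} depend only on $\mathrm{sgn}(v)$ as long as $x>0$, so the shifted process $(X_t-\tilde x,V_t)_{0\leq t\leq T}$ has the same law as a process started at $(x,-1)$ and run up to its first visit to the origin. The second step is to check that $V_T=-1$ almost surely: the path $t\mapsto X_t$ is continuous with $\pm 1$-valued derivative, the velocity has only finitely many jumps on bounded intervals a.s., and $X$ can decrease only while $V=-1$; hence on a left neighbourhood of $T$ we must have $V=-1$, and since $\tilde x>0$ no forced jump occurs at $T$ itself.

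With these two facts at hand, the strong Markov property applied at $T$ yields that the post-$T$ process $(X_{T+s},V_{T+s})_{s\geq 0}$ is independent of $\cF_T$ and distributed as a copy of $(X,V)$ issued from $(\tilde x,-1)$. Its hitting time of the origin equals $S_{(x+\tilde x,-1)}-T$ and is therefore independent of $T$ and distributed as $S_{(\tilde x,-1)}$, which gives the claimed decomposition in law. The stochastic domination $S_{(x+\tilde x,-1)}\geq_{\mathrm{sto.}}S_{(x,-1)}$ is then immediate from the non-negativity of $S_{(\tilde x,-1)}$. The only (mild) obstacle I anticipate is the verification that $V_T=-1$ a.s.; all the other steps are routine consequences of translation invariance in $\{x>0\}$ and the strong Markov property.
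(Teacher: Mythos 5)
Your argument is correct and is precisely the fleshed-out version of the paper's one-line proof, which simply invokes the strong Markov property (applied, as you do, at the first passage to level $\tilde x$, using translation invariance of the dynamics on $\{x>0\}$). The point you flag as a mild obstacle, namely $V_T=-1$ a.s., is handled correctly: the process approaches $\tilde x$ from above so the velocity is $-1$ on a left neighbourhood of $T$, and no jump occurs exactly at $T$ almost surely since $\tilde x>0$.
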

\begin{proof}
 This is a straightforward consequence of the strong  Markov property.
\end{proof}

\subsection{The invariant measure}

Recall that  the invariant law of $(X,V)$ is denoted by $\nu$.  

\begin{lem}\label{le:invmeas}
 For any bounded function $f:
\dR\times\BRA{-1,+1}\to \dR$, we have
$$
\int\! f\,d\nu=\frac{1}{\dE_{(0,+1)}(S)}\dE_{(0,+1)}\PAR{\int_0^S\!f(X_s,V_s)\,ds},
$$
where $S$ is the first hitting time of 0. 

\end{lem}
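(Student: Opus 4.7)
The plan is to verify that the excursion measure $\pi$ defined by the right-hand side is invariant for the semigroup $(P_t)_{t\geq 0}$ of $(X,V)$, and then to invoke uniqueness of the invariant probability measure. First, Lemma \ref{le:laplaceS} gives $\dE_{(0,+1)}(S)=2/(b-a)<\infty$, so choosing $f\equiv 1$ shows that $\pi$ is a probability measure on $\dR_+\times\BRA{-1,+1}$; for any bounded measurable $f$ the quantity $\pi(f)$ is well defined by Fubini.

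To check $\pi P_t=\pi$, I fix $f$ bounded measurable and $t\geq 0$ and apply the Markov property at time $s$ inside the integral to rewrite
\begin{align*}
\int\! P_t f\, d\pi
&= \frac{1}{\dE_{(0,+1)}(S)}\,\dE_{(0,+1)}\PAR{\int_0^S f(X_{s+t},V_{s+t})\,ds}\\
&= \frac{1}{\dE_{(0,+1)}(S)}\,\dE_{(0,+1)}\PAR{\int_t^{S+t} f(X_u,V_u)\,du}.
\end{align*}
The pathwise identity $\int_0^{S+t}f=\int_0^S f+\int_S^{S+t}f=\int_0^t f+\int_t^{S+t}f$ rearranges into
$$\int_0^S\! f(X_u,V_u)\,du-\int_t^{S+t}\!f(X_u,V_u)\,du=\int_0^t\! f(X_u,V_u)\,du-\int_S^{S+t}\!f(X_u,V_u)\,du,$$
so invariance $\int P_t f\,d\pi=\int f\,d\pi$ reduces to the single equality $\dE_{(0,+1)}\PAR{\int_S^{S+t}\!f(X_u,V_u)\,du}=\dE_{(0,+1)}\PAR{\int_0^t\!f(X_u,V_u)\,du}$. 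Since $(X_S,V_S)=(0,+1)$ almost surely, this is exactly the strong Markov property applied at the stopping time $S$.

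To upgrade invariance of $\pi$ into the equality $\pi=\nu$, I will invoke positive Harris recurrence: every initial state $(x,v)$ reaches $(0,+1)$ in finite mean time, since the hitting times $S_{(x,\pm 1)}$ have finite Laplace transforms by \eqref{eq:cl}, so that the atom $(0,+1)$ is positive recurrent and forces uniqueness of the invariant probability measure. The main obstacle I foresee is purely technical: justifying the Fubini exchanges and the a.s.\ finiteness of $S$ from every initial condition, both of which are immediate from the explicit integrability estimates already at hand. Essentially no new computation is required beyond carefully applying the strong Markov property at the regeneration time $S$.
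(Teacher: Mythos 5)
Your proof is correct, but it follows a genuinely different route from the paper. The paper simply observes that $(X_t,V_t)$ is a regenerative process with regeneration points given by the successive returns to $(0,+1)$, and then quotes the cycle formula for the stationary distribution of a regenerative process from Asmussen (Theorem 1.2, Chapter VI), together with $\dE_{(0,+1)}(S)=2/(b-a)<\infty$ from Lemma \ref{le:laplaceS}. You instead verify directly that the cycle measure $\pi$ satisfies $\pi P_t=\pi$: the Markov property at each deterministic time $s$ (using that $\{s<S\}\in\cF_s$, since $S$ is a stopping time) turns $\int P_tf\,d\pi$ into $\dE_{(0,+1)}\bigl(\int_t^{S+t}f\bigr)/\dE_{(0,+1)}(S)$, the pathwise rearrangement reduces invariance to $\dE_{(0,+1)}\bigl(\int_S^{S+t}f\bigr)=\dE_{(0,+1)}\bigl(\int_0^{t}f\bigr)$, and this is exactly the strong Markov property at $S$ combined with $(X_S,V_S)=(0,+1)$ a.s.; you then identify $\pi$ with $\nu$ by uniqueness of the invariant probability, which holds because $(0,+1)$ is an accessible atom reached in finite mean time from every state (via the Laplace transforms \eqref{eq:cl}). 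Both arguments rest on the same regeneration structure at $(0,+1)$; the paper's citation buys, in addition, the convergence of time averages to the cycle formula, whereas your version is more self-contained and elementary but must separately supply the uniqueness step. All the integrability and Fubini justifications you flag are indeed immediate from $\|f\|_\infty<\infty$ and $\dE_{(0,+1)}(S)<\infty$, so there is no gap.
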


\begin{proof}  We will use a standard result on  regenerative processes  (see 
 Asmussen \cite[Chapter  VI]{Asmussen} for 
 background).
Let $(S_n)_{n\geq 1}$ denote the lengths of  the  consecutive  
excursions away from $0$,    $S_0:=S$ 
and $\Theta_n:=S_0 +\dots +S_n$. By the strong Markov property, 
$(\Theta_n)_{n\in \dN}$ is a renewal process,  for each $n\in \dN$ 
the post $\Theta_n$-process  
$(\Theta_{n+1},\Theta_{n+2},\dots, (X_{\Theta_n+t},V_{\Theta_n+t})_{t\geq 0})$ 
is independent of $(\Theta_0,\dots, \Theta_n)$, and  is equally  distributed for all $n\geq 1$. This means that 
$(X_t,V_t)_{t\geq 0}$ is a regenerative process with regeneration points
 $(\Theta_n)_{n \in \dN}$ and  cycle 
 length  corresponding to the length of an excursion.   The result is  immediate from 
 \cite[Theorem 1.2, Chapter VI]{Asmussen} and Lemma \ref{le:laplaceS}. 
\end{proof}

\begin{lem}\label{F}  Define, for a
 non negative function $g:\BRA{-1,+1}\to \dR$ and $\lambda\in\dR$,
 $$
 F:=\int_0^Se^{\lambda X_s}g(V_s)\,ds.
 $$
 Then, conditionally on $(X_0,V_0)=(0,+1)$, we have
 $$
F\overset{\cL}{=}
(g(1)+g(-1))\int_0^E \! e^{\lambda y}\,dy+
\sum_{i=1}^N
\int_0^{S^{(i)}}\!e^{\lambda(EU_{(i,N)}+ X^{(i)}_s)}g(V^{(i)}_s)\,ds, 
\mbox{ where }
$$ 
\begin{itemize}
\item  ${(U_i)}_{i\geq 0}$ is an sequence of independent uniformly distributed  
random variables on $[0,1]$, and for each   $n\geq 1$ 
$(U_{(1,n)},U_{(2,n)},\ldots,U_{(n,n)})$ is the re-ordered sampling of
$(U_1,U_2,\ldots,U_n)$;
\item   ${(X^{(i)}_t,V^{(i)}_t)}_{0\leq t\leq S^{(i)}}$ is a sequence of 
independent excursions; 
\item  $E\sim \cE(b)$,   $\cL(N\vert E=x)=\cP(ax)$, and the pair 
$(E,N)$ is independent of the all the previous  random variables. 
 \end{itemize}
\end{lem}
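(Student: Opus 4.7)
The plan is to decompose an excursion of $(X,V)$ starting from $(0,+1)$ into three disjoint time-parts and identify their contributions to $F$: the initial linear ascent from $(0,+1)$ to $(E,+1)$ with $E\sim\cE(b)$; the pure-descent segments along which $V=-1$ and no embedded excursion is ongoing; and the embedded excursions triggered by the flips of $V$ from $-1$ to $+1$ during descent.

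On $[0,E]$ one has $V_s=+1$ and $X_s=s$, so this portion contributes $g(+1)\int_0^E e^{\lambda y}\,dy$ to $F$. At time $E$ the process is at $(E,-1)$. Between consecutive embedded excursions the dynamics is $\dot X=-1$, $V=-1$; since each embedded excursion starts and ends at the same value of $X$, the net descent from $E$ to $0$ is achieved entirely through these pure-descent segments, whose total duration is therefore equal to $E$. Re-parametrizing them by $u\in[0,E]$ (the pure-descent time elapsed since $E$) and changing variable to $y=E-u$, this part contributes $g(-1)\int_0^E e^{\lambda y}\,dy$. Adding the two produces the first term in the claimed identity.

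For the embedded excursions, note that while $V=-1$ the velocity flips to $+1$ at rate $a$, so viewed in pure-descent time these flips form a Poisson process of intensity $a$ on $[0,E]$. By Proposition \ref{prop:poisson}, conditionally on $E=x$ the number $N$ of such flips has law $\cP(ax)$, and further conditionally on $N=n$ their pure-descent-time instants are distributed as an ordered sample $(xU_{(1,n)},\ldots,xU_{(n,n)})$. The $i$-th flip occurs at the real-space position $E-t_i$; since $1-U\overset{\cL}{=}U$ for $U$ uniform on $[0,1]$, the unordered multiset $\BRA{E-t_i:i=1,\dots,N}$ has the same joint law as $\BRA{EU_{(i,N)}:i=1,\dots,N}$. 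The strong Markov property applied at each flip guarantees that the portion of the trajectory following it is, shifted by $-(E-t_i)$ in position, an independent excursion $(X^{(i)},V^{(i)})$ distributed as $(X,V)$ under $\dP_{(0,+1)}$. As these excursions are i.i.d.\ and independent of all previously revealed data, one may re-label them so that the $i$-th one starts at position $EU_{(i,N)}$ and contributes $\int_0^{S^{(i)}}e^{\lambda(EU_{(i,N)}+X^{(i)}_s)}g(V^{(i)}_s)\,ds$ to $F$; summing over $i=1,\ldots,N$ yields the second term.

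The main subtlety is making the re-parametrization from real time (in which each embedded excursion occupies a positive interval) to pure-descent time rigorous, which is carried out by iterated application of the strong Markov property at the end of each embedded excursion. Once this bookkeeping is in place, the distributional statements --- namely $E\sim\cE(b)$, $\cL(N\vert E)=\cP(aE)$, the $(U_i)$ i.i.d.\ uniform on $[0,1]$, and $(X^{(i)},V^{(i)})$ i.i.d.\ excursions jointly independent of $(E,N)$ --- all follow at once from Proposition \ref{prop:poisson} and from standard properties of Poisson processes.
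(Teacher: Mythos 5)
Your proof is correct and follows essentially the same route as the paper: the paper's own proof simply refers back to the excursion decomposition established in the proof of Lemma \ref{le:laplaceS} (initial ascent of length $E\sim\cE(b)$, pure descent of total duration $E$, and $N$ embedded excursions with $\cL(N\vert E)=\cP(aE)$) and notes via Proposition \ref{prop:poisson} that the embedded excursions occur at heights distributed as $(EU_{(N,N)},\ldots,EU_{(1,N)})$. You have merely spelled out the bookkeeping (the change of variable on the descent segments and the symmetry $1-U\overset{\cL}{=}U$ for the heights) that the paper leaves implicit.
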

 
  \begin{proof}  
The argument has been already been given in the first part of the proof 
of Lemma~\ref{le:laplaceS}. We just  notice that  the $N$  independent 
embedded excursions therein occur at the heights
$(EU_{(N,N)},EU_{(N-1,N)},\ldots,EU_{(1,N)})$ (see  
Proposition~\ref{prop:poisson}).
\end{proof}
  
  \smallskip
  
We now are ready to compute $\nu$   which is the first point in 
Theorem~\ref{th:conv-reflection}. Since
$$
\dE\PAR{\int_0^{S^{(i)}}\!e^{\lambda(EU_{(i,N)}+X^{(i)}_s)}g(V^{(i)}_s)\,ds %
\Big\vert E,N,{(U_i)}_{i\geq 1}}
=e^{\lambda EU_{(i,N)}} \dE_{(0,+1)}(F),
$$
we get that
$$
 \dE_{(0,+1)}(F)=%
 \dE\PAR{\frac{(g(1)+g(-1))}{\lambda}(e^{\lambda E}-1)+
 \dE_{(0,+1)}(F)\sum_{i=1}^Ne^{\lambda EU_{(i,N)}}}.
$$
We have, for any $x>0$ and $k\in\dN$,
$$
 \sum_{i=1}^k\dE\PAR{e^{\lambda xU_{(i,k)}}}
 =\dE\PAR{\sum_{i=1}^k e^{\lambda x U_i}}
 = \frac{k}{\lambda x}(e^{\lambda x}-1),
$$
and then
$$
\dE\PAR{ \sum_{i=1}^Ne^{\lambda EU_{(i,N)}}\Big\vert E}=
\dE\PAR{ \frac{N}{\lambda E}(e^{\lambda E}-1)\Big\vert E}=
 \frac{a}{\lambda }(e^{\lambda E}-1).
$$
As a conclusion, we have
\begin{align*}
\dE_{(0,+1)}(F)&=\frac{g(1)+g(-1)+a\dE_{(0,+1)}(F)}{\lambda} 
\PAR{\frac{b}{b-\lambda}-1}\\
&=\frac{g(1)+g(-1)}{b-\lambda}+\frac{a}{b-\lambda}\dE_{(0,+1)}(F),
\end{align*}
which provides the expression of $\dE_{(0,+1)}(F)$ for any $\lambda<b-a$:
$$
\dE_{(0,+1)}(F)=\frac{g(1)+g(-1)}{b-a-\lambda }.
$$
On the other hand, Lemma~\ref{le:laplaceS} ensures that 
$\dE_{(0,+1)}(S)=2/(b-a)$ from where we get, for any $\lambda<b-a$, that 
$$
\int_{\dR\times\BRA{-1,+1}}\! e^{\lambda x}g(v)\nu(dx,dv)=
\frac{b-a}{b-a-\lambda } \frac{g(1)+g(-1)}{2}.
$$
In other words, the invariant measure of $(X,V)$ is
$\nu=\cE(b-a)\otimes (1/2)(\delta_{-1}+\delta_{+1})$.

\section{The coalescent time for the reflected process}\label{sec:coalescent-refl}

\subsection{The crossing time}\label{subsec:crossing}

We will  first construct a coupling $(X,V,\tilde X,\tilde V)$ starting 
at $(x,v,\tilde x,\tilde v)$ until a time $T_c=T_c(x,v,\tilde x,\tilde v)$ 
called \emph{crossing time},  at which $X_{T_c}=\tilde X_{T_c}$. In 
doing so, we will also stochastically  control $T_c$ and $X_{T_c}$. 
The coupling will consist in making the two velocities equal as longer 
as possible.  Assume without loss of generality  that $\tilde x< x$. 
Plainly, if $V$ and $\tilde V$ are different, we let the two processes 
evolve independently until one of them performs a jump or until $X-\tilde X$ 
hits $0$. At that time, if $X\neq \tilde X$, the two velocities are equal and 
we set them equal until $\tilde X$ hits the origin. During this period the 
paths of $X$ and $\tilde X$ are parallel and, at the hitting time of the origin, 
$V$ and $\tilde V$ are once again different. We then iterate this procedure 
until $T_c$.  Notice that $\tilde X$ is smaller than $X$ on $[0,T_c)$. 
We now make the construction with full  details.

\subsubsection{The main initial configuration}

Assume first that $(X_0,V_0,\tilde X_0,\tilde V_0)=(x,-1,0,+1)$. The 
coupling works as follows: with rate $a$ (resp. $b$), $V$ (resp. $\tilde V$) 
flips to $+1$ (resp. $-1$) and if none of these two events occurs before 
time $x/2$, then
$$
X_{x/2}=\tilde X_{x/2}
\quad\text{and}\quad
V_{x/2}=-1=-\tilde V_{x/2}.
$$
If a jump occurs at time $\tau_1<x/2$, then
$(X_{\tau_1},V_{\tau_1},\tilde X_{\tau_1},\tilde V_{\tau_1})=(x-\tau_1,U,\tau_1,U)$,
where $U=-1$ with probability $b/(a+b)$ ($\tilde V$ jumps before $V$) 
and $U=1$ with probability $a/(a+b)$. Then, $V$ and $\tilde V$ are 
chosen equal until $\tilde X$ hits 0 \emph{i.e.} during a time 
$S_{(\tau_1,U)}$ and
$$
(X_{\tau_1+S_{(\tau_1,U)}},V_{\tau_1+S_{(\tau_1,U)}},
\tilde X_{\tau_1+S_{(\tau_1,U)}},\tilde V_{\tau_1+S_{(\tau_1,U)}})
=(x-2\tau_1,-1,0,+1).
$$
Notice that $S_{(\tau_1,+1)}\overset{\cL}{=}S+S_{(\tau_1,-1)}$ where
$S$ is the length of an excursion independent of $S_{(\tau_1,-1)}$.
As a conclusion, if a jump occurs at time $\tau_1<x/2$, then the full process
$(X,V,\tilde X,\tilde V)$ is equal to $(x-2\tau_1,-1,0,+1)$ at time
$\tau_1+S_{(\tau_1,-1)}+BS$ where $B\sim\cB(a/(a+b))$.  
One has to iterate this procedure until $X-\tilde X$ hits 0.

Consider now a Poisson process ${(N(t))}_{t\geq 0}$ with intensity
$a+b$. We denote by ${(T_n)}_{n\geq 0}$ its jump times (with
$T_0=0$) and define ${(\tau_i)}_{i\geq 1}$ by $\tau_i=T_i-T_{i-1}$
for $i\geq 1$. The number of return times at 0 for $\tilde X$
before $T_c$ is distributed as $N(x/2)$ and the length of the
periods when $(X,V)$ and $(\tilde X,\tilde V)$ are independent are
given by $\tau_1$, $\tau_2$,\ldots,$\tau_{N(x/2)}$ and
$x/2-T_{N(x/2)}$. Then,
$$
T_c(x,-1,0,+1)\overset{\cL}{=}\sum_{i=1}^{N(x/2)}
\PAR{\tau_i+S_{(\tau_i,-1)}+B_iS^{(i)}}+x/2-T_{N(x/2)},
$$
where the law of ${(S^{(i)})}_{i\geq 1}$ is the one of the length
of an excursion, ${(S_{(\tau_i,-1)})}_{i\geq 1}$ are the hitting
times of 0 starting from ${(\tau_i)}_{i\geq 1}$, ${(B_i)}_{i\geq
1}$ have the law $\cB(a/(a+b))$ and all these random variables are
independent. Since $T_{N(x/2)}=\tau_1+\tau_2+\cdots+\tau_{N(x/2)}$,
Lemma~\ref{lem:somme} ensures that
\begin{align}\label{eq:tc0}
T_c(x,-1,0,+1)&\overset{\cL}{=}x/2+S_{(T_{N(x/2)},-1)}
+\sum_{i=1}^{N(x/2)}\PAR{B_iS^{(i)}}\nonumber \\
&\leq_\mathrm{sto.}x/2+S_{(x/2,-1)}+\Sigma(x)
\end{align}
where
$$
\Sigma(x):\overset{\cL}{=}\sum_{i=1}^{N(x/2)}\PAR{B_iS^{(i)}}.
$$

Notice that  $\Sigma(u+v)\overset{\cL}{=}\Sigma(u)+\Sigma(v)$,
where $\Sigma(u)$ and $\Sigma(v)$ are independent, and that $\Sigma(u)$ distributes as the sum of the lengths of $N$ independent excursions, with $N\sim \cP(au/2)$.




\subsubsection{Other configurations}

We  next  construct the paths until $T_c(x,v,\tilde x,\tilde v)$ and 
control this time irrespective of the initial velocities. Without loss of 
generality, we can assume that $x\geq \tilde x$. We  just have to 
construct the paths until $(X,V,\tilde X,\tilde V)$ reaches a state 
$(u,-1,0,+1)$,  and then  make use of the previous section.

Assume firstly that $v=\tilde v=U\in\BRA{-1,+1}$. We have  to construct a
trajectory of $(\tilde X,\tilde V)$ until $S_{(\tilde x,U)}$ the hitting time of $0$.
Define for any $t\in [0,S_{(\tilde x,U)})$, $V_t=\tilde V_t$,
$X_t=\tilde X_t-\tilde x+x$, $V_{S_{(\tilde x,U)}}=-1$ and
$X_{S_{(\tilde x,U)}}=x-\tilde x$. Using Lemma~\ref{lem:somme} and
\eqref{eq:tc0}, one has
\begin{align*}
T_c(x,U,\tilde x,U)&\overset{\cL}{=}S_{(\tilde x,U)}+T_c(x-\tilde x,-1,0,+1)\\
&\overset{\cL}{=}S\ind_\BRA{U=+1}+S_{(\tilde x,-1)}+T_c(x-\tilde x,-1,0,+1)\\
&\leq_\mathrm{sto.}S\ind_\BRA{U=+1}
+\frac{x-\tilde x}{2}+S_{((x+\tilde x)/2,-1)}+\Sigma(x-\tilde x).
\end{align*}



Assume now that $v=1=-\tilde v$. The processes $(X,V)$ 
and $(\tilde X,\tilde V)$ are chosen independent until the first 
jump time.  This  is equal to $E=(E_1\wedge \tilde x)\wedge E_2$, 
where $E_1\sim \cE(a)$, $E_2\sim \cE(b)$ and 
$(X_E,V_E,\tilde X_E,\tilde V)=(x+E,U,\tilde x-E,U)$ with 
$U\in\BRA{-1,+1}$. In particular, one has
$$
\frac{X_E+\tilde X_E}{2}=\frac{x+\tilde x}{2}%
\quad\text{and}\quad
\frac{X_E-\tilde X_E}{2}=\frac{x-\tilde x}{2}+E.%
$$
 Since for any $y,\tilde y\geq 0$, 
 $T_c(y,-1,\tilde y,-1)\leq_\mathrm{sto.} T_c(y,1,\tilde y,1)$ 
 this ensures that
\begin{align*}
T_c(x,+1,\tilde x,-1)&\leq_\mathrm{sto.} E+T_c(x+E,+1,\tilde x-E,+1)\\
&\leq_\mathrm{sto.} E+S+\frac{x-\tilde x}{2}+E+S_{((x+\tilde x)/2,-1)}
+\Sigma(x-\tilde x+2E)\\
&\leq_\mathrm{sto.} 2E+\Sigma(2E)+S+\frac{x-\tilde x}{2}
+S_{((x+\tilde x)/2,-1)}+\Sigma(x-\tilde x).
\end{align*}

If $v=-1=-\tilde v$, we proceed as in the previous case. With the same notations,
$$
\frac{X_E+\tilde X_E}{2}=\frac{x+\tilde x}{2}%
\quad\text{and}\quad
\frac{\vert X_E-\tilde X_E\vert}{2}\leq \frac{x-\tilde x}{2}+E.
$$
We then get the same upper bound as before. 
As a conclusion we have established the following upper bound
for $T_c$:
\begin{lem}\label{lem:tc}
For any $x\geq \tilde x$ and $v,\tilde v\in\BRA{-1,+1}$,
$$
T_c(x,v,\tilde x,\tilde v)\leq_{\mathrm{sto.}} 2E+\Sigma(2 E)+
S+S_{((x+\tilde x)/2,-1)}+\frac{x-\tilde x}{2}+\Sigma(x-\tilde x),
$$
where $E=F\wedge \tilde x$ with $F$ is an exponential variable
with parameter $a+b$ and $\Sigma(u)$ is the sum of the  lengths of $N$ 
independent excursions where $N\sim\cP(au/2)$. Moreover,
$$
X_{T_c}=\tilde X_{T_c}\leq \frac{x-\tilde x}{2}+E
\quad\text{and}\quad
V_{T_c}=-\tilde V_{T_c}.
$$
\end{lem}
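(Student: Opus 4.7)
The plan is to construct the coupling of $(X,V)$ and $(\tilde X,\tilde V)$ explicitly, reduce every initial configuration to one canonical situation, and carefully sum contributions of independent phases. The guiding principle is to keep $V_t = \tilde V_t$ whenever possible, since during such phases the gap $X_t - \tilde X_t$ is constant, which is the fastest way to drive the two positions together while preserving the correct marginal laws.

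First I would treat the canonical initial state $(x,-1,0,+1)$. The two processes start pointing at each other, so if no jump of either velocity occurs before time $x/2$ the positions coalesce exactly at $x/2$. Otherwise, at the first jump time $\tau_1 < x/2$ the two velocities end up equal (both $+1$ or both $-1$, with probabilities $a/(a+b)$ and $b/(a+b)$ according to which clock fires first); I then lock them together until $\tilde X$ returns to $0$. At that return the configuration has reset to $(x-2\tau_1,-1,0,+1)$, and the procedure iterates. By Proposition~\ref{prop:poisson} the jump times $(\tau_i)$ are the inter-arrivals of a Poisson process of rate $a+b$, and the number of iterations is distributed as $N(x/2)$. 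Applying Lemma~\ref{lem:somme} to collapse the consecutive hitting times starting from $\tau_1,\tau_2,\ldots$ into a single hitting time from $T_{N(x/2)}\leq x/2$, and collecting the extra Bernoulli-selected excursion contributions into $\Sigma(x)$, gives $T_c(x,-1,0,+1) \leq_{\mathrm{sto.}} x/2 + S_{(x/2,-1)} + \Sigma(x)$.

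Next I would reduce the general configuration $(x,v,\tilde x,\tilde v)$ with $\tilde x \leq x$ to this canonical one. If $v=\tilde v$, I run the two processes in parallel with equal velocities until $\tilde X$ hits $0$; this contributes at most $S\ind_\BRA{v=+1}+S_{(\tilde x,-1)}$ and brings the state to $(x-\tilde x,-1,0,+1)$. If $v \neq \tilde v$, I first let the two coordinates evolve independently for a time $E := F \wedge \tilde x$ with $F \sim \cE(a+b)$; at that moment the velocities have become equal, the half-sum of the positions is still $(x+\tilde x)/2$, and the half-difference is at most $(x-\tilde x)/2 + E$. I am thus reduced to the previous case but with an excess gap $2E$, which I absorb using $\Sigma(u_1+u_2) \overset{\cL}{=} \Sigma(u_1)+\Sigma(u_2)$ into an extra $\Sigma(2E)$ summand and into the $2E$ linear term. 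Taking the stochastically largest bound over the four $(v,\tilde v)$ configurations yields the lemma; the crossing-position bound $X_{T_c} \leq (x-\tilde x)/2 + E$ and the opposite-sign identity $V_{T_c} = -\tilde V_{T_c}$ follow immediately from the construction.

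The main obstacle is the bookkeeping needed to produce a single uniform bound valid across the four velocity configurations. In the opposite-velocity cases one must account for the initial independent phase of length $E$ both as added drift distance (the $2E$ linear term) and as the source of $\Sigma(2E)$ new Poisson-sampled excursions, with no double counting. The strong Markov property at each reset time, the additivity of $\Sigma$, and Lemma~\ref{lem:somme} are precisely the tools that legitimise this merger and permit the iterative construction.
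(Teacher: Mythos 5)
Your proposal is correct and follows essentially the same route as the paper: the same iterative coupling from the canonical state $(x,-1,0,+1)$ with the Poisson-clock decomposition and Lemma~\ref{lem:somme} collapsing the hitting times, the same reduction of the equal-velocity case by running the paths in parallel until $\tilde X$ hits the origin, and the same independent phase of length $E=F\wedge\tilde x$ with the additivity $\Sigma(u_1+u_2)\overset{\cL}{=}\Sigma(u_1)+\Sigma(u_2)$ absorbing the extra gap in the opposite-velocity cases. No substantive differences from the paper's argument.
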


\subsection{A simple way to stick the paths}\label{subsec:stick}

We now assume that $(X_0,V_0)=(x,1)$ and
$(\tilde X_0,\tilde V_0)=(x,-1)$ and construct  two paths which
are equal after a coalescent time $T_{cc}(x)$.  The idea is to use   
the same exponential clocks for both paths but in a different order. 
We explain the generic step of this construction considering $R$ 
and $Q$ two given independent random variables with respective 
laws $\cE(a)$ and $\cE(b)$. There are two possible situations:

\begin{figure}
\begin{center}
 \includegraphics[scale=0.6]{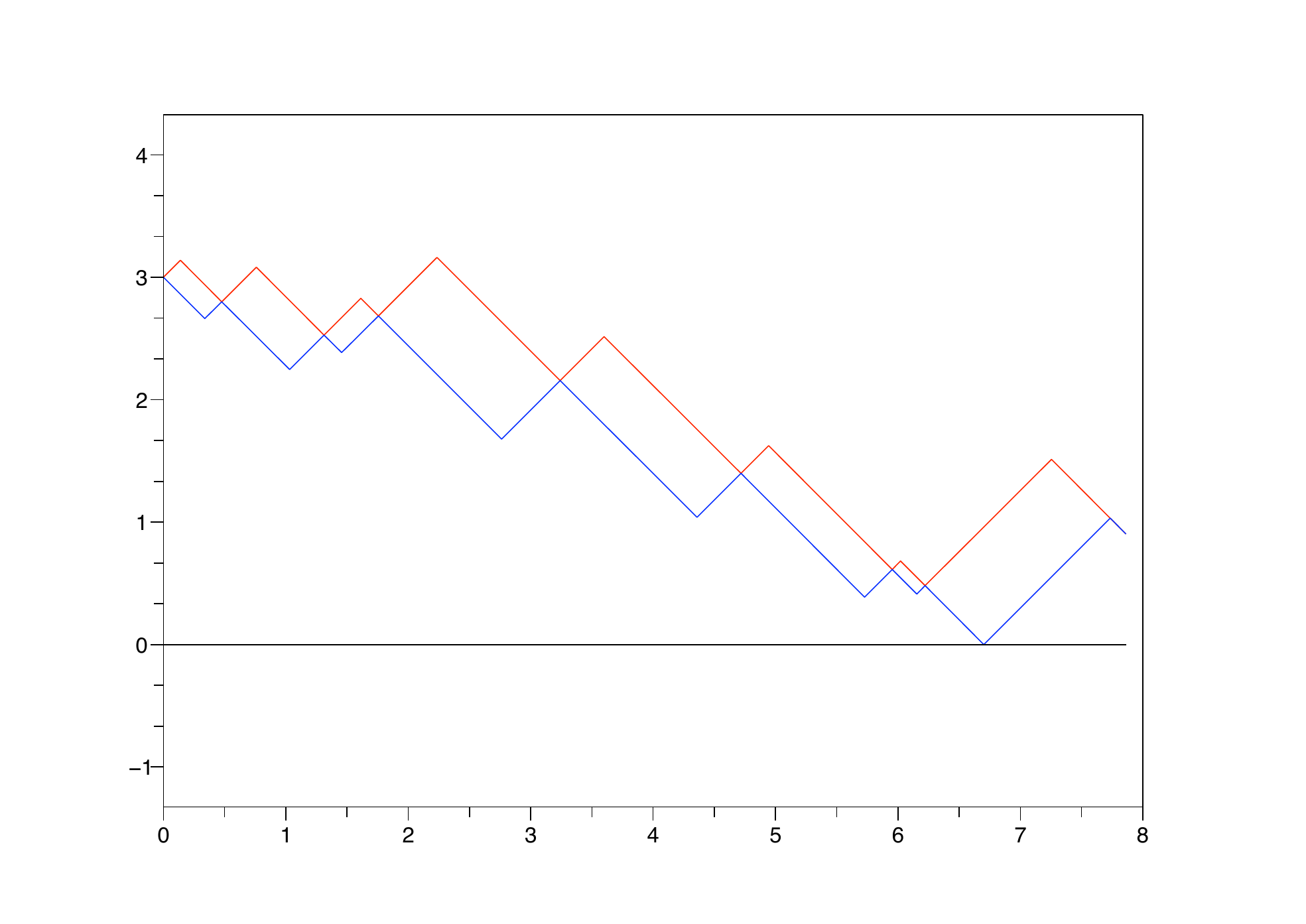}
 \caption{Two paths starting at $x=3$ with different velocities until
 they stick together.}
 \label{fi:colle}
\end{center}
\end{figure}

\begin{itemize}
 \item Case 1:  $R<x$.  In this case, defining $T=R+Q$,
 $$
 V_t=
 \begin{cases}
 +1 & \text{if }t\in[0,Q),\\
 -1 & \text{if }t\in[Q,T), \\
 +1& \text{if }t=T
 \end{cases}
 \quad\text{and}\quad
 \tilde V_t=
 \begin{cases}
 -1 & \text{if }t\in[0,R),\\
 +1 & \text{if }t\in[R,T),\\
 -1& \text{if }t=T,
  \end{cases}
$$
one has  $X_T=x+Q-R=\tilde X_T$ and
$V_T=1=-\tilde V_T$.  
 \item Case 2: $R\geq x$. In this case,  defining $T=x+Q$,
 $$
 V_t=
 \begin{cases}
 +1 & \text{if }t\in[0,Q),\\
 -1 & \text{if }t\in[Q,T),
 \end{cases}
  \quad\text{and}\quad
 \tilde V_t=
 \begin{cases}
 -1 & \text{if }t\in[0,x),\\
 +1 & \text{if }t\in[x,T),\\
 -1& \text{if }t=T,
  \end{cases}
$$
one has  $X_T=Q=\tilde X_T$ and $V_T=\tilde V_T=-1$.
In this case  $(X,V)$ and $(\tilde X,\tilde V)$ are coupled at time $T$.
\end{itemize}
We now  construct the paths. We take  an i.i.d. sequence of 
independent pairs of exponential variables $(R_n,Q_n)$ with 
$R_n \sim \cE(a)$ and $Q_n\sim \cE(b)$, and  inductively define 
$\tau_0=0$ and $\tau_{n+1}=\tau_n+T_n$,  with $T_n$  defined 
from $(R_n,Q_n)$ as above until Case 2 occurs. At each iteration, 
 \begin{itemize}
\item if $\tilde X$ does not hit the origin in the interval $[\tau_n,\tau_{n+1}]$
(Case 1) we set
$$
X_{\tau_{n+1}}=\tilde X_{\tau_{n+1}}
\quad\text{and}\quad
V_{\tau_{n+1}}=1=-\tilde V_{\tau_{n+1}} 	\, ; 
$$
\item if $\tilde X$ hits the origin in the interval $[\tau_n,\tau_{n+1}]$ (Case 2)
we set
$$
X_{\tau_{n+1}}=\tilde X_{\tau_{n+1}}
\quad, \quad
V_{\tau_{n+1}}=\tilde V_{\tau_{n+1}}=-1
\quad\text{and}\quad
T_{cc}(x):=\tau_{n+1}.
$$
\end{itemize}

By construction, $X_t\geq \tilde X_t$ for any $t\geq 0$ and the
coupling time $T_{cc}(x)$  is  smaller than the hitting time of the origin
time of $X$ (see Figure \ref{fi:colle}). As a conclusion we have shown 
the following result.
\begin{lem}\label{lem:tcc}
 There exists a coupling of $(X,V)$ and $(\tilde X,\tilde V)$ starting
 respectively from $(x,1)$ and $(x,-1)$ such that the coalescent
 time $T_{cc}(x)$ is (stochastically) smaller than $S_{(x,+1)}$ and
 $$
 X_{S_{(x,+1)}}=\tilde X_{S_{(x,+1)}}=0
 \quad\text{and}\quad
 V_{S_{(x,+1)}}=\tilde V_{S_{(x,+1)}}=1.
 $$
\end{lem}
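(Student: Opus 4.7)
The construction already described provides an explicit coupling, so the proof amounts to verifying three claims: (i) each of the two marginals $(X,V)$ and $(\tilde X,\tilde V)$ has the law of the reflected process driven by the generator $A$; (ii) $T_{cc}(x)\leq S_{(x,+1)}$ pathwise; and (iii) at time $S_{(x,+1)}$ both processes are at $(0,+1)$.

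For (i) I would fix a generic iteration $n$ starting at a state $(\xi_n,+1,\xi_n,-1)$ with $\xi_n:=X_{\tau_n}=\tilde X_{\tau_n}>0$, and check by case analysis that the independent variables $Q_n\sim\cE(b)$ and $R_n\sim\cE(a)$ drive each marginal correctly on $[\tau_n,\tau_{n+1}]$. For the $V$-marginal, $V$ stays at $+1$ during time $Q_n$ (rate $b$, since $X>0$), then switches to $-1$: in Case 1 the clock $R_n<\xi_n<\xi_n+Q_n$ rings before $X$ reaches $0$, so $V$ flips back to $+1$ at $\tau_{n+1}$, matching the rate $a$ dynamics; in Case 2 the interval ends with $V$ still equal to $-1$ (no reflection, since $X=Q_n>0$), and the residual waiting time in state $-1$ is $\cE(a)$-distributed by the memoryless property. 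For the $\tilde V$-marginal, $\tilde V$ stays at $-1$ until either $R_n$ rings (Case 1) or reflection at $0$ occurs at time $\tau_n+\xi_n$ (Case 2), and then stays at $+1$ for a time $Q_n$ before returning to $-1$ at $\tau_{n+1}$. Both marginals thus have the correct infinitesimal rates at every phase, and the strong Markov property at $\tau_{n+1}$ justifies the iteration.

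For (ii), I would prove by induction on the number of Case 1 iterations preceding the first Case 2 iteration $n^{*}$ that $X$ remains strictly positive on $[0,T_{cc}(x)]$: at each Case 1 step $X$ first grows from $\xi_n$ to $\xi_n+Q_n$ and then decreases by $R_n<\xi_n$; at the terminal Case 2 step $X$ decreases from $\xi_{n^{*}}+Q_{n^{*}}$ during time $\xi_{n^{*}}$, reaching the positive value $Q_{n^{*}}$ at $\tau_{n^{*}+1}=T_{cc}(x)$. Hence $T_{cc}(x)\leq S_{(x,+1)}$. For (iii), after $T_{cc}(x)$ the two processes occupy the common state $(Q_{n^{*}},-1)$, and driving both with the same exponential clocks thereafter keeps them equal; they therefore hit $0$ simultaneously at $S_{(x,+1)}$, and the reflection rule in $A$ forces $V_{S_{(x,+1)}}=\tilde V_{S_{(x,+1)}}=+1$.

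The main obstacle is point (i): one must verify carefully, using the memoryless property of the exponential distributions, that each marginal receives exponential waiting times of the correct parameter in every state, even though the pair $(R_n,Q_n)$ is reshuffled between the two marginals at each iteration and is only partially consumed by $V$ in Case 2.
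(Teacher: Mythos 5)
Your proposal is correct and follows the paper's own route: the paper's ``proof'' consists precisely of the iterative two-case construction from the pairs $(R_n,Q_n)$, with the conclusion that $X$ stays positive up to $T_{cc}(x)$ and that the coupled pair then hits the origin together and reflects. Your point (i) — checking via the memoryless property that each marginal receives $\cE(b)$ and $\cE(a)$ holding times in the correct order, with the censoring at $\xi_n$ matching either the reflection of $\tilde X$ or the residual clock of $V$ after coalescence — is exactly the verification the paper leaves implicit, and your points (ii) and (iii) reproduce its concluding argument.
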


\subsection{The Laplace transform of the coupling time}\label{subsec:coupl}

We  now gather the previous estimates to control the
Laplace transform of the coupling time of the two paths starting
respectively from $(x,v)$ and $(\tilde x,\tilde v)$:

\begin{prop}\label{prop:tb}
For any $x\geq \tilde x\geq 0$, any $v,\tilde v\in \BRA{-1,+1}$,
there exists a coalescent coupling such that the coupling time
$T(x,v,\tilde x,\tilde v)$ is stochastically smaller than a random
variable
\begin{equation}\label{eq:tb}
 \overline{T}(x,\tilde x)\overset{\cL}{=} F+\Sigma(2F)+S_{(F,-1)}
 +S+\tilde S+S_{(x,-1)}+\frac{x+\tilde x}{2}
 +\Sigma(x-\tilde x),
\end{equation}
where $F\sim\cE(a+b)$, $S$ and $\tilde S$ are excursion lengths
and all the random variables are independent. Furthermore, for
any $\lambda \in[0,\lambda_c]$,
$$
\dE\PAR{e^{\lambda T(x,v,\tilde x,\tilde v)}}\leq %
\frac{(a+b)\psi(\lambda)^2}{2a+b-\lambda-a\psi(\lambda)-c(\lambda)}
 \exp\PAR{x c(\lambda) +\frac{x+\tilde x}{2}\lambda
 +\frac{x-\tilde x}{2}a(\psi(\lambda)-1)}.
$$
At last, a realization of $\bar{T}(x,\tilde{x})$ is the first
hitting time at $0$ of $X$ after
$T_c(x,v,\tilde{x},\tilde{v})+T_{cc}(X_{T_c(x,v,\tilde{x},\tilde{v}})$,
and then $X_{\overline{T}(x,\tilde x)}=\tilde
X_{\overline{T}(x,\tilde x)}=0$ and $V_{\overline{T}(x,\tilde
x)}=\tilde V_{\overline{T}(x,\tilde x)}=1$ hold.
\end{prop}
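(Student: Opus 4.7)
My plan is to concatenate the two couplings constructed in the preceding two subsections, invoking the strong Markov property at the crossing time. Starting from $(x,v,\tilde x,\tilde v)$ with $x\geq \tilde x$, I use the coupling of Section~\ref{subsec:crossing} until the crossing time $T_c:=T_c(x,v,\tilde x,\tilde v)$: at $T_c$ the two positions coincide at some value $X_{T_c}$, but the velocities are opposite. From that state, I then run the coupling of Section~\ref{subsec:stick} (Lemma~\ref{lem:tcc}), whose coalescent time $T_{cc}(X_{T_c})$ is stochastically dominated by the hitting time of $0$ by $X$ starting from $(X_{T_c},+1)$, and at that hitting time both coordinates sit at $(0,+1)$. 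Consequently the overall coupling time $T(x,v,\tilde x,\tilde v)$ is dominated by $T_c+S_{(X_{T_c},+1)}$, which by construction equals the first time $X$ hits the origin after $T_c+T_{cc}(X_{T_c})$.

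Next I turn this into the claimed stochastic bound. By the Corollary following Lemma~\ref{lem:somme}, conditionally on $X_{T_c}$, one has $S_{(X_{T_c},+1)}\overset{\cL}{=}\tilde S+S_{(X_{T_c},-1)}$ with $\tilde S$ an independent excursion length. Lemma~\ref{lem:tc} yields both $T_c\leq_{\mathrm{sto.}}2E+\Sigma(2E)+S+S_{((x+\tilde x)/2,-1)}+\tfrac{x-\tilde x}{2}+\Sigma(x-\tilde x)$ and $X_{T_c}\leq (x-\tilde x)/2+E$, with $E=F\wedge \tilde x$, $F\sim\cE(a+b)$. Lemma~\ref{lem:somme} then decomposes $S_{(X_{T_c},-1)}\leq_{\mathrm{sto.}}S_{((x-\tilde x)/2,-1)}+S_{(E,-1)}$ into two independent pieces, and combines the two $S_{(\cdot,-1)}$ hitting times from heights $(x+\tilde x)/2$ and $(x-\tilde x)/2$ into a single $S_{(x,-1)}$. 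Finally, using $2E\leq F+\tilde x$ together with $\tilde x+\tfrac{x-\tilde x}{2}=\tfrac{x+\tilde x}{2}$, and the monotonicity of $\Sigma(\cdot)$ and $S_{(\cdot,-1)}$ that allows us to replace $E$ by $F$ inside $\Sigma(2E)$ and $S_{(E,-1)}$, I obtain the announced form of $\overline{T}(x,\tilde x)$.

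For the Laplace transform, I exploit independence of all the ingredients and the explicit formulas already established. Recall that $\dE(e^{\lambda \Sigma(u)})=\exp(au(\psi(\lambda)-1)/2)$ (by Poisson thinning as noted after the definition of $\Sigma$), $\dE(e^{\lambda S})=\psi(\lambda)$, and $\dE(e^{\lambda S_{(u,-1)}})=e^{uc(\lambda)}$. The key identity is $c(\lambda)=a(\psi(\lambda)-1)+\lambda$, which appears in the proof of \eqref{eq:cl}. Conditioning on $F$ in the block $F+\Sigma(2F)+S_{(F,-1)}$ gives
\[
\dE\bigl(e^{\lambda(F+\Sigma(2F)+S_{(F,-1)})}\bigr)
=\dE\bigl(e^{F(\lambda+a(\psi(\lambda)-1)+c(\lambda))}\bigr)
=\dE(e^{2Fc(\lambda)})=\frac{a+b}{a+b-2c(\lambda)},
\]
and substituting $c(\lambda)=a\psi(\lambda)-a+\lambda$ rewrites the denominator as $2a+b-\lambda-a\psi(\lambda)-c(\lambda)$. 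Multiplying by the contributions $\psi(\lambda)^2$ from $S,\tilde S$, $e^{xc(\lambda)}$ from $S_{(x,-1)}$, $e^{\lambda(x+\tilde x)/2}$ from the deterministic term, and $\exp\bigl(\tfrac{x-\tilde x}{2}a(\psi(\lambda)-1)\bigr)$ from $\Sigma(x-\tilde x)$ delivers the stated bound.

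The only place requiring real care is the bookkeeping of the domain of validity for $\lambda$: one needs $\lambda\leq \lambda_c$ so that $\psi(\lambda)$, $c(\lambda)$, $\dE(e^{\lambda S_{(\cdot,-1)}})$ are all finite, and then to verify that the denominator $a+b-2c(\lambda)$ stays positive on $[0,\lambda_c]$ - which follows since at $\lambda=\lambda_c$ one has $c(\lambda_c)=\tfrac{b-a}{2}<\tfrac{a+b}{2}$. The rest is algebraic simplification and systematic use of independence.
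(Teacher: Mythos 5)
Your proposal is correct and follows essentially the same route as the paper: concatenate the crossing coupling of Lemma~\ref{lem:tc} with the sticking coupling of Lemma~\ref{lem:tcc}, dominate the total time via Lemma~\ref{lem:somme} and the bounds $X_{T_c}\leq\frac{x-\tilde x}{2}+E$, $2E\leq F+\tilde x$, and then compute the Laplace transform factor by factor (your identity $\lambda+a(\psi(\lambda)-1)+c(\lambda)=2c(\lambda)$ is just a slightly more explicit form of the paper's computation of the denominator). No gaps worth flagging.
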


\begin{proof}
From the previous sections, one can construct a coalescent
coupling with a coalescent time $T$  such that
$$
T(x,v,\tilde x,\tilde v)
\overset{\cL}{=}T_c(x,v,\tilde x,\tilde v)+T_{cc}(X_{T_c(x,v,\tilde x,\tilde v)}).
$$
Thanks to Lemmas~\ref{lem:somme}, \ref{lem:tc} and
\ref{lem:tcc}, we get that
\begin{align*}
T(x,v,\tilde x,\tilde v)&\leq_\mathrm{sto.} 
2E+\Sigma(2E)+S+S_{((x+\tilde x)/2,-1)}%
 +\frac{x-\tilde x}{2} + S_{((x-\tilde x)/2+E,+1)}+\Sigma(x-\tilde x)\\
&\leq_\mathrm{sto.} 2E+\Sigma(2E)+S_{(E,-1)}+S+\tilde S
+S_{(x,-1)}+\frac{x-\tilde x}{2}+\Sigma(x-\tilde x),
\end{align*}
 where $\tilde S$ is an independent copy of $S$ and all the
 random variables of the right hand side are independent.
 Recall that $E$ is equal to $F\wedge \tilde x$ where $F$
 is a random variable variable of law $\cE(a+b)$. In
 particular, $2E\leq_\mathrm{sto.}F+\tilde x$ and then
 $$
 T(x,v,\tilde x,\tilde v)\leq_\mathrm{sto.} \overline T(x,\tilde x),
 $$
where  $\overline T(x,\tilde x)$  is given by \eqref {eq:tb}. At last, 
for any $\lambda\leq \lambda_c$, one has
 $$
 \lambda+a(\psi(\lambda)-1)+c(\lambda)\leq 
 \lambda_c+a(\psi(\lambda_c)-1)+c(\lambda_c)=b-a< a+b.
 $$
 This ensures that for any $\lambda\leq \lambda_c$,
 \begin{align*}
 \dE\PAR{e^{\lambda F+\lambda\Sigma(2F)+ \lambda S_{(F,-1)}}}
 &= \dE\PAR{e^{(\lambda+a(\psi(\lambda)-1) +c(\lambda)) F}}
 =\frac{a+b}{2a+b-\lambda-a\psi(\lambda)-c(\lambda)}.
\end{align*}
Using the independence of the random variables provides
the desired upper bound.
\end{proof}

\begin{cor}
In particular, if $\tilde x\leq x$,
$$
\dE\PAR{e^{\lambda_c T(x,v,\tilde x,\tilde v)}}\leq 
\frac{(a+b)b}{2a^2} e^{r(a,b)x},
$$
where $\lambda_c$ is given in \eqref{eq:lc} and
$$
r(a,b)=\frac{3(b-a)}{4} \vee (b-\sqrt{ab}).
$$
\end{cor}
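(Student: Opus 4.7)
The plan is simply to substitute $\lambda = \lambda_c$ into the bound of Proposition~\ref{prop:tb} and carry out the algebraic simplifications, using that $\tilde x\leq x$.

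First I would record the values at $\lambda_c = (a+b)/2 - \sqrt{ab}$. By Lemma~\ref{le:laplaceS}, $\psi(\lambda_c)=\sqrt{b/a}$. Since $a+b-2\lambda_c = 2\sqrt{ab}$, the discriminant $(a+b-2\lambda_c)^2 - 4ab$ vanishes, so $c(\lambda_c)=(b-a)/2$. Plugging these into the denominator $2a+b-\lambda-a\psi(\lambda)-c(\lambda)$ from Proposition~\ref{prop:tb}, one has $a\psi(\lambda_c)=\sqrt{ab}$, so the denominator becomes
$$
2a+b-\tfrac{a+b}{2}+\sqrt{ab}-\sqrt{ab}-\tfrac{b-a}{2} = 2a,
$$
yielding the prefactor $(a+b)\psi(\lambda_c)^2/(2a) = (a+b)b/(2a^2)$, exactly as claimed.

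Next I would simplify the exponent $x c(\lambda_c) + \tfrac{x+\tilde x}{2}\lambda_c + \tfrac{x-\tilde x}{2} a(\psi(\lambda_c)-1)$. Collecting terms in $x$ and $\tilde x$, with $a(\psi(\lambda_c)-1)=\sqrt{ab}-a$ and $(\sqrt{b}-\sqrt{a})^2 = a+b-2\sqrt{ab}$, a short computation gives the exponent as
$$
x\cdot\frac{3(b-a)}{4} + \tilde x\cdot\frac{3a+b-4\sqrt{ab}}{4}.
$$
Here the coefficient of $x$ already reads $3(b-a)/4$, and the coefficient of $\tilde x$ can be rewritten as $(b-\sqrt{ab}) - 3(b-a)/4$; in particular it has the same sign as $(b-\sqrt{ab}) - 3(b-a)/4$.

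The last step is to bound the exponent by $r(a,b)\,x$ using $0\leq \tilde x \leq x$. Two cases:
\begin{itemize}
\item If $b-\sqrt{ab}\leq 3(b-a)/4$, the coefficient of $\tilde x$ is $\leq 0$, so the exponent is $\leq \tfrac{3(b-a)}{4}x = r(a,b)\,x$ (since in this case $r(a,b)=3(b-a)/4$).
\item If $b-\sqrt{ab} > 3(b-a)/4$, the coefficient of $\tilde x$ is positive, and using $\tilde x\leq x$ the exponent is at most $x$ times the sum of the two coefficients, which is exactly $b-\sqrt{ab} = r(a,b)$.
\end{itemize}
In both cases the exponent is bounded by $r(a,b)\,x$, and combining with the prefactor yields the stated estimate.

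There is no substantive obstacle here: all difficulty is in the algebra, which is arranged by the simultaneous vanishing of the discriminant and of $a\psi(\lambda_c)-\sqrt{ab}$ at the critical value. The only observation with conceptual content is the case split identifying $r(a,b)$ as the maximum of $3(b-a)/4$ and $b-\sqrt{ab}$, which arises naturally from whether it is advantageous to set $\tilde x=0$ or $\tilde x = x$ when maximizing the exponent over $\tilde x\in[0,x]$.
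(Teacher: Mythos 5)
Your proposal is correct and follows essentially the same route as the paper: substitute $\lambda=\lambda_c$ into Proposition~\ref{prop:tb}, use $\psi(\lambda_c)=\sqrt{b/a}$ and $c(\lambda_c)=(b-a)/2$ to reduce the prefactor to $(a+b)b/(2a^2)$, and bound the exponent, a linear function of $\tilde x\in[0,x]$, by its maximum at the endpoints $\tilde x=0$ and $\tilde x=x$, which is exactly how $r(a,b)=\tfrac{3(b-a)}{4}\vee(b-\sqrt{ab})$ arises in the paper. All your intermediate computations check out.
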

\begin{proof}
Let us choose $\lambda=\lambda_c$. Since
$$
\lambda_c=\frac{(\sqrt b-\sqrt a)^2}{2},\quad
c(\lambda_c)=\frac{b-a}{2}\quad\text{and}\quad
\psi(\lambda_c)=\sqrt{\frac{b}{a}},
$$
we get that
$$
x c(\lambda_c) +\frac{x+\tilde x}{2}\lambda_c
+\frac{x-\tilde x}{2}a(\psi(\lambda_c)-1)
=\frac{\sqrt b-\sqrt a}{4}\PAR{3x(\sqrt b+\sqrt a)+\tilde x(\sqrt b-3\sqrt a)}.
$$
If $x$ is fixed,  then the right hand side is a linear function of $\tilde x\in [0,x]$ 
and it is bounded above by the maximum of its values at $\tilde x\in\BRA{0,x}$. 
In other words,
$$
x c(\lambda) +\frac{x+\tilde x}{2}\lambda+\frac{x-\tilde x}{2}a(\psi(\lambda)-1)
\leq \frac{3}{4}(b-a) \vee (b-\sqrt{ab}).
$$
which concludes the proof.
\end{proof}

Using inequality  \eqref{coupleineg}, the  end of the proof of Theorem 
\ref{th:conv-reflection} is now obvious.

\section{The unreflected process}\label{sec:no-reflection}

We finally sketch the  proof of Theorem~\ref{th:no-reflection}. The invariant 
measure is  obtained by a similar regeneration argument as the one in 
Lemma~\ref{le:invmeas}, using  the obvious relation between 
excursions away from $(0,+1)$ of the reflected and unreflected processes, 
and Lemma~\ref{F}. The sketch of the proof of the 
bound~\eqref{eq:var-no-reflection} is the following:
\begin{itemize}
\item construct a coupling $(X,V,\tilde X,\tilde V)$ starting from
$(x, v,\tilde x,\tilde v)$ until time $\overline T(x,\tilde x)$, where
$$
x=\vert y\vert,
\quad
v=\mathrm{sgn}(y)w,
\quad
\tilde x=\vert \tilde y\vert,
\quad\text{and}\quad
\tilde v=\mathrm{sgn}(\tilde y)\tilde w,
$$
and notice that $X_{\overline T(x,\tilde x)}=\tilde X_{\overline T(x,\tilde x)}=0$
(see Proposition \ref{prop:tb});
\item construct $(Y,W,\tilde Y,\tilde W)$ on $[0,\overline T(x,\tilde x)]$
from $(X,V,\tilde X,\tilde W)$ and $(y,w,\tilde y,\tilde w)$ 
(see Section~\ref{sec:prelim}). Notice that 
$Y_{\overline T(x,\tilde x)}=\tilde Y_{\overline T(x,\tilde x)}=0$,
but in general $W_{\overline T(x,\tilde x)}=-\tilde W_{\overline T(x,\tilde x)}$;
\item wait for the first jump time  $E\sim\cE(2b)$   of  $(Y,W,\tilde Y,\tilde W)$  (as the minimum
of two independent random variables of law $\cE(b)$);
\item construct a coalescent coupling $(Y,W,\tilde Y,\tilde W)$
starting from $(E,w,-E,w)$ with a coupling time  smaller  than the hitting time of the origin
when starting at $(E,+1)$. 
\end{itemize}
We just give the details of the last point,  the other ones being clear. 
The construction is similar
to the one of $T_{cc}(x)$ for the reflected process. Assume that
$y=-\tilde y>0$ and $w=\tilde w=+1$ and consider two
independent random variables $(R,Q)$ with respective laws $\cE(a)$ and
$\cE(b)$. Then we may have:
\begin{itemize}
 \item Case 1: $R<y$. In this case, defining $T=R+Q$,
 $$
 \tilde W_t=
 \begin{cases}
 +1 & \text{if }t\in[0,R),\\
 -1 & \text{if }t\in[R,T),\\
 +1& \text{if }t=T,
  \end{cases}
 \quad\text{and}\quad
 W_t=
 \begin{cases}
 +1 & \text{if }t\in[0,Q),\\
 -1 & \text{if }t\in[Q,T), \\
 +1& \text{if }t=T,
 \end{cases}
$$
one has  $Y_T=y+Q-R=-\tilde Y_T$ and
$W_T=\tilde W_T=1$.
 \item Case 2:  $R\geq y$. In this case, defining $T=y+Q$,
 $$
 \tilde W_t=
 \begin{cases}
 +1 & \text{if }t\in[0,y),\\
 +1 & \text{if }t\in[y,T),\\
 -1& \text{if }t=T,
  \end{cases}
  \quad\text{and}\quad
 W_t=
 \begin{cases}
 +1 & \text{if }t\in[0,Q),\\
 -1 & \text{if }t\in[Q,T), 
 \end{cases}
$$
on has $Y_T=\tilde Y_T$ and $W_T=\tilde W_T=-1$.
At that time, $(Y,W)$ and $(\tilde Y,\tilde W)$ are coupled.
\end{itemize}
The algorithm to construct the paths $(Y,W,\tilde Y,\tilde W)$
consists in   repeating  the above construction until Case 2
occurs for the first time. This will happen before $Y$ reaches the
origin.
From this scheme and  previous work on the process $(X,V)$,
 the coupling time $S(y,w,\tilde y,\tilde w)$ satisfies
$$
S(y,w,\tilde y,\tilde w)\leq_{\mathrm{sto.}} \overline %
T(\ABS{y},\vert\tilde y\vert)+E+S_{(E,+1)}\overset{\cL}{=} %
\overline T(\ABS{y},\vert\tilde y\vert)+E+S+S_{(E,-1)}.
 $$
As a conclusion,
$
\dE\PAR{e^{\lambda S(y,w,\tilde y,\tilde w)}}\leq %
\dE\PAR{e^{\lambda \overline T(\ABS{y},\vert\tilde y\vert)}}
\psi(\lambda) \frac{2b}{2b-\lambda-c(\lambda)}.
$
In particular,
$$
\dE\PAR{e^{\lambda_c S(y,w,\tilde y,\tilde w)}}
\leq \PAR{\frac{b}{a}}^{5/2}\frac{a+b}{\sqrt{ab}+b}
 e^{r(a,b)x},
$$
where
$
r(a,b)=\frac{3(b-a)}{4} \vee (b-\sqrt{ab})
$. Using  \eqref{coupleineg} ends the proof.

\medskip

\noindent
{\bf Acknowledgements.} J. Fontbona thanks financial support from 
Fondecyt 1110923 and Basal-Conicyt, and the invitation and support of 
IRMAR (U. de  Rennes I).  F. Malrieu  acknowledges financial  support 
from ANR EVOL. All  three  authors thank an anonymous referee for 
suggestions that enabled  the improvement  of a former version of the paper.

\addcontentsline{toc}{section}{\refname}%
\bibliography{FontbonaGuerinMalrieuFinal}

\bigskip

\begin{flushright}\texttt{Compiled \today.}\end{flushright}

{\footnotesize %
  \noindent Joaquin \textsc{Fontbona},
 e-mail: \texttt{fontbona(AT)dim.uchile.cl}

 \medskip

 \noindent\textsc{CMM-DIM UMI 2807 UChile-CNRS, Universidad de Chile,
 Casilla 170-3, Correo 3, Santiago, Chile.}

 \bigskip

 \noindent H\'el\`ene \textsc{Gu\'erin},
e-mail: \texttt{helene.guerin(AT)univ-rennes1.fr}

 \medskip

  \noindent\textsc{UMR 6625 CNRS Institut de Recherche Math\'ematique de
    Rennes (IRMAR) \\ Universit\'e de Rennes 1, Campus de Beaulieu, F-35042
    Rennes \textsc{Cedex}, France.}

 \bigskip

 \noindent Florent \textsc{Malrieu},
 e-mail: \texttt{florent.malrieu(AT)univ-rennes1.fr}

 \medskip

  \noindent\textsc{UMR 6625 CNRS Institut de Recherche Math\'ematique de
    Rennes (IRMAR) \\ Universit\'e de Rennes 1, Campus de Beaulieu, F-35042
    Rennes \textsc{Cedex}, France.}

}

\end{document}